\documentclass{elsarticle}
\usepackage[english]{babel}
\usepackage[utf8x]{inputenc}
\usepackage[T1]{fontenc}
\usepackage{graphicx}
\usepackage[colorinlistoftodos]{todonotes}
\usepackage[colorlinks=true, allcolors=tudelftblue]{hyperref} 
\usepackage{amssymb}
\usepackage{caption}
\usepackage{subcaption}
\usepackage{xcolor}
\usepackage{float}
\usepackage{algorithm}
\usepackage[noend]{algpseudocode}
\usepackage{titling} 
\usepackage{blindtext}\usepackage{titlesec}
\usepackage[colorinlistoftodos]{todonotes}
\usepackage{tikz}
\usepackage{array}
\usepackage{geometry}
\usepackage{mathtools}
\usepackage{hyperref} 
\usepackage{sectsty}
\usepackage{amsmath}
\usepackage{listings}  
\usepackage{color}     
\usepackage{pgfplots}
\pgfplotsset{compat=newest}
\usepackage{amsthm}
\newtheorem{definition}{Definition}
\newtheorem{theorem}{Theorem}
\newtheorem{lemma}{Lemma}
\newtheorem{proposition}{proposition}
\newtheorem{assumption}{Assumption}
\newtheorem{remark}{Remark}
\theoremstyle{remark}
\newtheorem{example}{Example}
\usepackage{enumitem}
\allowdisplaybreaks
\usepackage{tikzpagenodes}
\usepackage[justification=centering]{caption}
\usepackage{booktabs}
\usepackage{listings}
\numberwithin{equation}{section}
\definecolor{tudelftdarkblue}{RGB}{12,35,64}
\definecolor{tudelftcyan}{RGB}{0,166,214}
\definecolor{tudelftblue}{RGB}{0, 118, 194}
\allsectionsfont{\color{black}} 
\usepackage{accents}

\newcommand{\customfootnotetext}[2]{{
  \renewcommand{\thefootnote}{#1}
  \footnotetext[0]{#2}}}
\titleformat{\paragraph}
{\normalfont\normalsize\bfseries}{\theparagraph}{1em}{}
\titlespacing*{\paragraph}
{0pt}{3.25ex plus 1ex minus .2ex}{1.5ex plus .2ex}
\usepackage{setspace}
\begin{document}

\newpage
\begin{center}\customfootnotetext{$\dagger$}{Electronic Supplementary Information (ESI) available}
{\LARGE A Cubic Underestimator for Deterministic Global Optimization\textsuperscript{$\dagger$}}
\end{center}

\begin{center}
{\textit{Krishan Kumar and Ishan Bajaj\textsuperscript{*}\customfootnotetext{*}{Corresponding author: email: \href{mailto:ibajaj@iitk.ac.in}{ibajaj@iitk.ac.in}}}}
\\
{\small \textit{Department of Chemical Engineering, Indian Institute of Technology Kanpur, Uttar Pradesh, India, 208016}}
\addcontentsline{toc}{section}{Abstract}
\section*{Abstract}
\begin{quote}
We propose a new underestimator with pointwise and Hausdorff convergence of order three for deterministic global optimization of twice continuously differentiable functions with Lipschitz continuous Hessians. The proposed underestimator is obtained by augmenting the second-order Taylor expansion with a cubic remainder term of the objective function yielding a valid global underestimator that can be used to obtain lower bounds in a branch-and-bound method to solve nonconvex global optimization problems. The underestimator is nonconvex, and finding its global minimum could be computationally expensive. Accordingly, we derive a sufficient condition under which the proposed cubic underestimator is edge-concave, so that its minimum over a box can be found by vertex evaluation, while retaining third-order convergence. We show that the maximum separation distance between the objective function and the underestimator decreases cubically with the width of the box. This higher order convergence enables the lower bounds to converge more rapidly to the optimal value as the variable bounds are progressively tightened. The proposed underestimator is incorporated into a deterministic branch-and-bound algorithm and compared with the $\alpha$BB and second-order edge-concave underestimators on a collection of 130 benchmark problems. The numerical results show that the proposed underestimator outperforms the compared underestimators in terms of both the number of explored branch-and-bound nodes and total CPU time, while maintaining a competitive computational cost per node. These results demonstrate that the proposed cubic edge-concave underestimator can provide more effective lower bounds and better computational performance in deterministic global optimization.
\end{quote}
\end{center}
\textbf{Keywords:} Cubic underestimator, Edge-concave functions, Deterministic global optimization, Branch-and-bound
\section{Introduction}\label{sec1}

We consider the following box-constrained global optimization problem
\[
\begin{aligned}
\min_{x\in P} &~ f(x),
\end{aligned}
\]
where $P=[x^L,x^U]\subset\mathbb{R}^n$ and $f:P\rightarrow\mathbb{R}$ is a twice continuously differentiable nonconvex function whose Hessian is Lipschitz continuous. Such problems arise in many areas of science and engineering, including machine learning, process systems engineering, economics, and data science. Since a nonconvex function may have many local minima, finding the global minimum is often computationally challenging (it is, in fact, NP hard \cite{vavasis1991nonlinear}). Deterministic global optimization methods enable finding the global minimum while providing rigorous guarantees of global optimality \cite{al1990jointly, horst1987convergence, hansen1979global, horst1992new, tuy1985conical,hansen1992global,benson1982convergence,ratz1995selection}. Branch-and-bound methods are one of the most widely used frameworks for global optimization. These methods recursively partition the feasible region and associate with each subregion a lower bound on the objective function and an upper bound obtained from a feasible solution. A subregion can then be discarded whenever its lower bound is larger than the best available upper bound. Thus, the efficiency of a branch-and-bound algorithm largely depends on the quality of the lower bounds computed at each node. Therefore, constructing tight and computationally efficient underestimators is a fundamental step in the development of effective deterministic global optimization algorithms \cite{floudas2013deterministic}.

Convex relaxations have played a fundamental role in deterministic global optimization. In particular, the convex envelope is the tightest convex underestimator of a function over a given domain, but explicit convex envelope representations are available only for certain classes of functions such as bilinear \cite{al1983jointly}, trilinear \cite{meyer2004trilinear,meyer2004trilinear1}, multilinear \cite{ryoo2001analysis}, univariate monomials of odd degree \cite{liberti2003convex}, and low-dimensional edge-concave functions \cite{tardella1988class,meyer2005convex,tardella2008existence}. 

McCormick relaxations are an important class of relaxations for deterministic global optimization. For factorable functions, McCormick relaxations provide a systematic way to construct convex underestimators and concave overestimators by applying relaxation rules to basic operations such as addition, multiplication, and composition \cite{mccormick1976computability,bompadre2012convergence}. Tsoukalas and Mitsos \cite{tsoukalas2014multivariate} further generalized the framework to multivariate outer functions and provided relaxation rules for operations such as multiplication, minimum, maximum, and fractional functions. McCormick relaxations have also been used in global optimization problems with embedded differential equations \cite{mitsos2009mccormick}. For further developments based on McCormick relaxations, readers may refer to \cite{scott2011generalized,sahlodin2011convex,scott2013improved,wechsung2015reverse,stuber2015convex,khan2017differentiable,ye2023extended} and the references therein.

When the objective function is nonconvex and does not possess an exploitable mathematical form, general underestimators, such as $\alpha$BB and edge-concave underestimators, have been proposed  \cite{maranas1994global,androulakis1995alphabb,adjiman1996rigorous,adjiman1998global,hasan2018edge,nath2025separable}. The $\alpha$BB method constructs a convex underestimator by adding a quadratic correction term to the original nonconvex function. The parameter $\alpha$ is chosen to ensure convexity of the resulting relaxation over the prescribed domain. Similar to $\alpha$BB, other convex underestimators were also developed, where an exponential form is used instead of a quadratic expression, see \cite{akrotirianakis2004new,akrotirianakis2004computational}. Several methods have been proposed to improve the performance of $\alpha$BB. We refer the reader to \cite{skjal2012generalization, hladik2015efficient,guzman2016performance,nerantzis2019tighter,zerrouki2024tighter} and references therein.

Although convex underestimators provide an attractive framework for obtaining valid lower bounds, convexity is not the only structural property that can be exploited in global optimization. Depending on the domain of the problem under consideration, one underestimator may be tighter than another \cite{papamichail2002rigorous}.  An alternative approach is based on {edge-concavity}. An important consequence of edge-concavity is that an edge-concave function admits a vertex-polyhedral convex envelope \cite{tardella2003polyhedral}.

Building on this idea, Hasan \cite{hasan2018edge} proposed an edge-concave underestimator ffor general twice continuously differentiable nonconvex functions, constructed by subtracting a positive quadratic term from the objective. The proposed edge-concave underestimator provides a tight relaxation for many nonconvex problems and has been successfully applied to global dynamic optimization \cite{bajaj2020global}, black-box optimization \cite{bajaj2020deterministic}, and the approximation of phase equilibria \cite{iftakher2022data}. More recently, a separable edge-concave underestimator was proposed for signomial functions, whose convex envelope can be represented by a single linear hyperplane using only $n+1$ vertices \cite{nath2025separable}.

Besides the validity and tightness of an underestimator, another fundamental property of the associated estimator scheme is its convergence order. The convergence order determines how rapidly the relaxation error, defined as the maximum difference between the objective function and its underestimator over a search region, decreases as the search region becomes smaller and can therefore influence the computational effort required by the global optimization algorithm \cite{schobel2010theoretical}. Convergence order of an estimator scheme has been studied in the analysis of interval extensions \cite{moore1979methods,alefeld2000interval} and has subsequently been extended to various relaxation schemes \cite{bompadre2012convergence,najman2016convergence,schaber2019convergence}. In particular, the convergence order of the estimator schemes associated with McCormick relaxations was analysed by Bompadre and Mitsos \cite{bompadre2012convergence}. They studied how the convergence order is affected by basic operations, including addition, multiplication, and univariate composition in terms of both pointwise sense and in Hausdorff metric. Further, pointwise and Hausdorff convergence were investigated by Najman and Mitsos \cite{najman2016convergence} for an estimator scheme associated with multivariate McCormick relaxations. It was also shown that for general smooth nonlinear functions, the maximal order of pointwise convergence of the estimator is two, and it is achieved, for example, by $\alpha$BB relaxation and convex envelopes \cite{bompadre2012convergence}.

Taylor models are another important class of higher order estimators. A $q$th-order Taylor model combines a Taylor polynomial of degree $q$ with a rigorous bound on the remainder term. Under suitable assumptions, the resulting estimator schemes can exhibit convergence orders related to the order of the Taylor expansion. In particular, estimators of $q$th-order Taylor and McCormick-Taylor models have been shown to possess convergence order of at least $q+1$ \cite{bompadre2013convergence}.
These results suggest that higher order approximation models can provide more rapidly decreasing relaxation errors as the variable bounds becomes smaller. Despite their favorable convergence properties, Taylor models are not yet used in global optimization solvers because the polynomial part of a Taylor model is generally nonconvex, and computing its global minimum over a box is NP-hard in general \cite{bompadre2013convergence}. Bounding the polynomial part using convex relaxations or natural interval extensions can reduce the effective convergence order, potentially negating the higher-order advantage of the Taylor expansion. These computational challenges motivate the development of a novel underestimator that achieves higher-order convergence while allowing a tractable lower bounding computation within a branch-and-bound framework. 

In this work, we construct a higher-order underestimator based on the cubic remainder bound associated with a second-order Taylor expansion. The use of cubic term in second-order Taylor expansion has been studied by Nesterov and Polyak \cite{nesterov2006cubic} in the context of cubic regularization of Newton's method. However, our objective is different. Rather than using a cubic term to construct a model for determining the next iterate, we use the cubic remainder bound to construct a valid underestimator for deterministic global optimization. In particular, unlike the cubic regularization model in \cite{nesterov2006cubic}, the proposed underestimator incorporates a negative cubic term to provide a valid lower bound over a given box. This cubic term arises naturally from the Lipschitz continuity of the Hessian and provides a bound on the remainder of the second-order Taylor expansion. The proposed cubic underestimator possesses two features that are particularly relevant to deterministic global optimization. First, the maximum estimation error between the objective function and the underestimator is bounded by a cubic function of the distance from the expansion point. Second, by introducing a parameter $\Theta$ (analogous to $\alpha$ in $\alpha$BB \cite{androulakis1995alphabb} and $\theta$ in edge-concave underestimators \cite{hasan2018edge}) and selecting it appropriately, the proposed cubic underestimator can be made edge-concave over the prescribed box. This property enables the lower bound to be computed by evaluating the underestimator at the vertices of the box.

The main contributions of this work are summarized as follows.
\begin{itemize}[label={\scriptsize\textbullet}]
    \item We develop a cubic underestimator for twice continuously differentiable functions with Lipschitz continuous Hessians. The underestimator is obtained by using a cubic bound on the remainder of the second-order Taylor expansion.

    \item We derive a sufficient condition under which the proposed cubic underestimator is edge-concave on a box. Consequently, its minimum over the box can be obtained by evaluating the underestimator at the vertices, which provides a convenient lower bound for the branch-and-bound procedure.

    \item We study the convergence properties of the proposed cubic estimator scheme. In particular, we establish that the scheme has pointwise convergence of order three and that the corresponding interval enclosures have Hausdorff convergence of order three. We further derive a bound on the maximum separation between the function and the underestimator.

    \item We implement the proposed cubic edge-concave underestimator (CECU) within a deterministic branch-and-bound framework to find the global minimum of a nonconvex function subject to box constraints. The computational performance of the proposed underestimator is tested on a collection of 130 nonconvex test problems and compared with an $\alpha$BB method \cite{adjiman1998global} and the existing second-order edge-concave underestimator (SOEC) \cite{hasan2018edge}.
\end{itemize}

The remainder of this paper is organized as follows. Section \ref{sec:preliminaries} introduces the required notations and preliminaries. Section \ref{sec:cubic_underestimator} presents the construction and validity of the proposed cubic underestimator along with a sufficient condition for edge-concavity of the proposed underestimator. Section \ref{sec:convergence} develops the pointwise and Hausdorff convergence orders of the cubic estimator scheme. Section \ref{sec:bnb} describes the branch-and-bound algorithms based on the proposed underestimator and discusses how the difference between the value of the objective function and the underestimator decreases as the branch-and-bound subboxes are successively refined. Numerical results and comparisons with existing $\alpha$BB and SOEC are presented in Section \ref{sec:numerical}, followed by concluding remarks in Section \ref{sec:conclusion}.

\section{Preliminaries and notations}\label{sec:preliminaries}
In this section, we introduce the notation and basic definitions used throughout the paper. We introduce notation for the Euclidean space, inner product, and norm, followed by the gradient and Hessian. We then define boxes and their associated width, midpoint, and vertex set. 

We denote the set of real numbers by $\mathbb{R}$ and the $n$-dimensional Euclidean space by $\mathbb{R}^{n}$, that is,
\[
\mathbb{R}^{n} = \underbrace{\mathbb{R} \times\mathbb{R} \times \cdots \times\mathbb{R}}_{n \text{ times}}.
\]
For $x\in\mathbb{R}^n$, $x_i$ denotes its $i$th component.  The inner product and  Euclidean norm in $\mathbb{R}^{n}$ for any two vectors $x,y\in\mathbb{R}^n$ are taken as
\[
\langle x,y\rangle = x^{\top}y, \text{ and } \|x\|=\sqrt{\langle x,x\rangle}, \text{ respectively}.
\]
For a matrix $A\in\mathbb{R}^{n\times n}$, we denote its spectral norm by
\[
\Vert A\rVert_2 = \max_{\|x\|=1}\|Ax\|.
\]
For a twice continuously differentiable function $f:\mathbb{R}^{n}\rightarrow\mathbb{R}$, we denote its gradient and Hessian at $x\in\mathbb{R}^{n}$ by $\nabla f(x)$ and $\nabla^2f(x)$, respectively, where
\begin{equation}\label{eq:Grad}
\nabla f(x)=\left[\tfrac{\partial f}{\partial x_1}(x),\,
\tfrac{\partial f}{\partial x_2}(x),\,\ldots,\,\tfrac{\partial f}{\partial x_n}(x) \right]^{\top},
\end{equation}
\text{ and }
\begin{equation}\label{eq:Hessian}
\nabla^2 f(x) =[a_{lj}]_{n\times n} 
\text{ with } 
     a_{lj} = 
    \begin{cases}
    \dfrac{\partial^2 f}{\partial x_l\partial x_j}(x) & \text{ if } l \neq j, \\
    \dfrac{\partial^2 f}{\partial x_l^2}(x) & \text{ if } l=j.
    \end{cases}
\end{equation}
 For two vectors $x,y\in\mathbb{R}^n$, the relation $x\leq y$ is considered componentwise, i.e., $x\leq y \iff x_{i}\leq y_{i}$ for all $i=1,2,\ldots,n$. Let $ P=[x^L,x^U]= \{x\in\mathbb{R}^n:x^L\leq x\leq x^U\}$ be a box, where $x^L,x^U\in\mathbb{R}^n$ and $x_i^L\leq x_i^U$ for $i=1,2,\ldots,n$. For a box $P$, the width or diameter of $P$ is taken as
\[
\omega(P)=\operatorname{diam}(P)  =\sup_{x,y\in P}\|x-y\|,
\]
and the midpoint of $P$ is denoted by $x^{M}$ such that $x_{i}^M=\frac{x_{i}^L+x_{i}^U}{2} \text{ for each } i=1,2,\ldots, n$.
We denote by $V(P)$ the set of vertices of $P$, given by
\begin{equation*}
    V(P)=\left\{v\in\mathbb{R}^n: v_i\in\{x_i^L,x_i^U\}, \text{ for all } i=1,2,\ldots,n \right\}.
\end{equation*}
Thus, when $x_i^L<x_i^U$ (the case we consider in this paper) for all $i=1,2,\ldots,n$, the set $V(P)$ contains exactly $2^n$ elements, corresponding to all possible combinations of its lower and upper bounds.
\section{Construction of cubic underestimator}\label{sec:cubic_underestimator}
In this section, we construct the proposed cubic underestimator based on the second-order Taylor expansion of the objective function. Under Assumption \ref{ass:lipschitz}, we first establish a bound on the Taylor remainder. We then use this bound to define the cubic underestimator and its associated overestimator, and establish their validity over the box $P$.


\begin{assumption}\label{ass:lipschitz}
The function $f:P\rightarrow\mathbb{R}$
is twice continuously differentiable, and its Hessian matrix satisfies
\begin{equation*}\label{eq:HessianLip}
\left \lVert\nabla^2f(x)-\nabla^2f(y)\right\lVert_2 \le L\|x-y\| \text{ for all } x,y\in P,
\end{equation*}
where $L>0$ is the Lipschitz constant of the Hessian of $f$ on $P$.
\end{assumption}

Assumption \ref{ass:lipschitz} implies that the third-order Taylor remainder of the objective function admits a cubic bound in terms of the Lipschitz constant on the box set. The following lemma summarizes the properties that will be required in the subsequent analysis.

\begin{lemma}\label{lem:taylor}
Suppose Assumption \ref{ass:lipschitz} holds.
Then, for every $x,y\in P$,
\begin{equation}
\left\| \nabla f(y)-\nabla f(x) -\nabla^2f(x)(y-x) \right\| \le \frac{L}{2}\|y-x\|^2,
\label{eq:gradbound}
\end{equation}
and
\begin{equation}
\left\lvert f(y)-f(x) -\langle\nabla f(x),y-x\rangle -\frac12 \langle \nabla^2f(x)(y-x), y-x \rangle \right \rvert \le \frac{L}{6}\|y-x\|^3.
\label{eq:taylorbound}
\end{equation}
\end{lemma}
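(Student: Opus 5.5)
The plan is the standard integral-form Taylor argument, which applies because $P$ is a box and hence convex. For $x,y\in P$ set $h=y-x$ and $z(t)=x+th$ for $t\in[0,1]$; then $z(t)\in P$, so Assumption \ref{ass:lipschitz} applies along the whole segment. Both bounds will come from writing the left-hand quantities as integrals of Hessian differences along this segment. The Lipschitz condition then bounds those differences linearly in $t$.

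For \eqref{eq:gradbound}, we use that $\nabla f$ is continuously differentiable. The fundamental theorem of calculus applied componentwise to $t\mapsto \nabla f(z(t))$ gives
\begin{equation*}
\nabla f(y)-\nabla f(x)=\int_0^1 \nabla^2 f(z(t))\,h\,dt .
\end{equation*}
Subtracting $\nabla^2 f(x)h=\int_0^1\nabla^2 f(x)h\,dt$ yields
\begin{equation*}
\nabla f(y)-\nabla f(x)-\nabla^2f(x)h=\int_0^1\bigl(\nabla^2 f(z(t))-\nabla^2 f(x)\bigr)h\,dt .
\end{equation*}
Next we take norms, moving the norm inside the vector integral. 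The definition of the spectral norm gives $\|Ah\|\le\|A\|_2\|h\|$. Assumption \ref{ass:lipschitz} gives $\|\nabla^2 f(z(t))-\nabla^2 f(x)\|_2\le L t\|h\|$. Together these bound the integrand's norm by $Lt\|h\|^2$, and $\int_0^1 t\,dt=\tfrac12$ gives \eqref{eq:gradbound}.

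For \eqref{eq:taylorbound}, we apply the one-dimensional fundamental theorem to $\varphi(t)=f(z(t))$, whose derivative is $\varphi'(t)=\langle\nabla f(z(t)),h\rangle$. This gives
\begin{equation*}
f(y)-f(x)-\langle\nabla f(x),h\rangle-\tfrac12\langle\nabla^2 f(x)h,h\rangle=\int_0^1\bigl\langle \nabla f(z(t))-\nabla f(x)-t\nabla^2 f(x)h,\;h\bigr\rangle\,dt ,
\end{equation*}
using $\int_0^1 t\,dt=\tfrac12$ to absorb the quadratic term. Now we apply \eqref{eq:gradbound} with the pair $(x,z(t))$, for which $z(t)-x=th$. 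It gives $\|\nabla f(z(t))-\nabla f(x)-t\nabla^2f(x)h\|\le \tfrac{L}{2}t^2\|h\|^2$. Cauchy--Schwarz then bounds the absolute value of the integrand by $\tfrac{L}{2}t^2\|h\|^3$, and integrating with $\int_0^1 t^2\,dt=\tfrac13$ gives $\tfrac{L}{6}\|h\|^3$, which is \eqref{eq:taylorbound}.

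There is no genuine obstacle; the argument is routine. The points requiring care are, first, noting convexity of $P$ so that the Lipschitz bound holds along the segment. Second, one needs the inequality $\|\int_0^1 v(t)\,dt\|\le\int_0^1\|v(t)\|\,dt$ for continuous vector-valued integrands; this follows, for instance, by pairing with the unit vector in the direction of the integral.
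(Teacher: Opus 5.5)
Your proposal is correct and follows essentially the same route as the paper. Both arguments integrate the Hessian along $z(t)=x+t(y-x)$ to get \eqref{eq:gradbound}, then write the Taylor remainder as $\int_0^1\langle \nabla f(z(t))-\nabla f(x)-t\nabla^2 f(x)(y-x),\,y-x\rangle\,dt$ and apply \eqref{eq:gradbound} to the pair $(x,z(t))$; your explicit notes on the convexity of $P$ and on $\|\int_0^1 v(t)\,dt\|\le\int_0^1\|v(t)\|\,dt$ make precise steps the paper uses tacitly.
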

\begin{proof}
Consider a function $z: [0,1]\rightarrow \mathbb{R}^{n}$ such that
\[
z(t)=x+t(y-x) \text{ for all } t\in[0,1].
\]
Since $z(0)=x$ and $z(1)=y$, therefore the fundamental theorem of calculus gives
\begin{equation}\label{Lm1:eq1}
\nabla f(y)-\nabla f(x) = \nabla f(z(1))-\nabla f(z(0)) = \int_0^1 \frac{d}{dt}\nabla f(z(t))\,dt.
\end{equation}
By the chain rule, we have
\begin{align*}
\frac{d}{dt}\nabla f(z(t)) & =  \nabla^2 f(z(t)) z'(t)\\
 & =  \nabla^2 f(z(t)) (y-x) \text{ as }z'(t)=y-x
\end{align*}
which implies using \eqref{Lm1:eq1},
\begin{equation}\label{Lm1:eq2}
\nabla f(y)-\nabla f(x) = \int_0^1 \nabla^2 f(x+t(y-x))(y-x)\,dt.
\end{equation}
On subtracting $\nabla^2f(x)(y-x)$ from both sides in \eqref{Lm1:eq2}, we get
\begin{equation}\label{Lm1:eq3}
\nabla f(y)-\nabla f(x)-\nabla^2f(x)(y-x)= \int_0^1 \Big( \nabla^2f(x+t(y-x)) -\nabla^2f(x) \Big) (y-x)\,dt.
\end{equation}
Taking norms on both sides in \eqref{Lm1:eq3}, we have 
\begin{align*}
 \left\| \nabla f(y)-\nabla f(x)-\nabla^2f(x)(y-x) \right\| &\le \int_0^1 \left\lVert \nabla^2f(x+t(y-x)) -\nabla^2f(x) \right \rVert_2 \,\|y-x\|\,dt
\\
&\le L\|y-x\|^2 \int_0^1 t\,dt \text{ using Assumption }\ref{ass:lipschitz}\\
& = \frac{L}{2}\|y-x\|^2,
\end{align*}
which proves \eqref{eq:gradbound}.\\
Again, using the fundamental theorem of calculus, we have
\begin{align*}
& f(y) - f(x) -\langle\nabla f(x),y-x\rangle-\frac12\langle\nabla^2f(x)(y-x),y-x\rangle
\nonumber
\\
&= \int_0^1 \left\langle \nabla f(x+t(y-x)) - \nabla f(x) -
t\nabla^2f(x)(y-x), y-x \right\rangle dt
\label{eq:integral}
\end{align*}
which implies using
\eqref{eq:gradbound} that 
\begin{align*}
 &\left| f(y) - f(x) - \langle\nabla f(x),y-x\rangle - \frac12 \langle \nabla^2f(x)(y-x), y-x \rangle \right|\\
&\le \int_0^1 \frac{Lt^2}{2} \|y-x\|^3 \,dt
\\
&= \frac{L}{2}\|y-x\|^3 \int_0^1 t^2dt = \frac{L}{6}\|y-x\|^3,
\end{align*}
which establishes
\eqref{eq:taylorbound}.
\end{proof}

The estimate in \eqref{eq:taylorbound} naturally motivates the
construction of a cubic underestimator. Indeed, by subtracting a bound on the magnitude of the Taylor remainder from the second-order Taylor expansion, we obtain a valid underestimator of the objective function over the set $P$.

\begin{definition}\label{def:undover}
For an arbitrary but fixed expansion point $x\in P$ and a parameter $\Theta\geq L$, we define the 
{cubic underestimator} of $f$ over $P$ by 
\begin{equation}
m_x(y)=f(x)+\langle\nabla f(x),y-x\rangle+\frac12\langle\nabla^2f(x)(y-x),y-x\rangle-\frac{\Theta}{6}\|y-x\|^3 \ \forall y\in P.
\label{eq:under}
\end{equation}
Similarly, the associated cubic overestimator of $f$ for a fixed expansion point $x\in P$ and a parameter $\Theta\geq L$ is defined as
\begin{equation*}
M_x(y) = f(x) + \langle\nabla f(x),y-x\rangle + \frac12 \langle \nabla^2f(x)(y-x), y-x \rangle + \frac{\Theta}{6}\|y-x\|^3 \ \forall y\in P.
\label{eq:over}
\end{equation*}
\end{definition}
The following theorem establishes that the model
\eqref{eq:under} is an underestimator of $f$ over $P$.
\begin{theorem}\label{thm:under}
Let $x$ be any arbitrary but fixed point in $P$. Then, 
\[ m_x(y) \le f(y) \text{ and } M_{x}(y)\geq f(y) 
\text{ for all } y\in P.
\]
\end{theorem}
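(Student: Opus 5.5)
The proof follows directly from the remainder estimate \eqref{eq:taylorbound} of Lemma~\ref{lem:taylor} combined with the choice $\Theta\ge L$ in Definition~\ref{def:undover}. Fix $x\in P$ and let $y\in P$ be arbitrary. It is convenient to name the second-order Taylor polynomial and the remainder,
\[
T_x(y)=f(x)+\langle\nabla f(x),y-x\rangle+\tfrac12\langle\nabla^2f(x)(y-x),y-x\rangle, \qquad R_x(y)=f(y)-T_x(y).
\]
In this notation $m_x(y)=T_x(y)-\frac{\Theta}{6}\|y-x\|^3$ and $M_x(y)=T_x(y)+\frac{\Theta}{6}\|y-x\|^3$.

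\textbf{Step 1.} Since $P$ is a box, it is convex, so the segment from $x$ to $y$ lies in $P$. Assumption~\ref{ass:lipschitz} therefore holds along this segment, and Lemma~\ref{lem:taylor} applies. By \eqref{eq:taylorbound},
\[
-\tfrac{L}{6}\|y-x\|^3\le R_x(y)\le \tfrac{L}{6}\|y-x\|^3 .
\]

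\textbf{Step 2.} Because $\Theta\ge L$ and $\|y-x\|^3\ge 0$, we have $\frac{L}{6}\|y-x\|^3\le\frac{\Theta}{6}\|y-x\|^3$. Hence
\[
-\tfrac{\Theta}{6}\|y-x\|^3\le R_x(y)\le\tfrac{\Theta}{6}\|y-x\|^3 .
\]

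\textbf{Step 3.} Adding $T_x(y)$ throughout gives
\[
m_x(y)\le f(y)\le M_x(y).
\]
Since $y\in P$ was arbitrary, both inequalities hold on all of $P$, which proves the theorem.

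The argument involves no real difficulty. The only point to check is that the lemma's hypotheses hold on the segment $[x,y]$, and this is guaranteed by the convexity of the box $P$.
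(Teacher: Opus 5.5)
Your proof is correct and matches the paper's argument: both bound the remainder by $\pm\frac{L}{6}\|y-x\|^3$ via Lemma~\ref{lem:taylor}, enlarge this to $\pm\frac{\Theta}{6}\|y-x\|^3$ using $\Theta\ge L$, and add back the second-order Taylor polynomial. Your remark about the convexity of $P$ is harmless but not needed here, since Lemma~\ref{lem:taylor} is already stated for all $x,y\in P$.
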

\begin{proof}
From Lemma \ref{lem:taylor}, for any $x,y\in P$ we have
\[ \left| R(x,y) \right| \le \frac{L}{6}\|y-x\|^3,
\]
where
$
R(x,y) = f(y) - f(x) - \langle\nabla f(x),y-x\rangle - \frac12 \langle \nabla^2f(x)(y-x), y-x \rangle
$.
Thus, we get 
\[ R(x,y) \ge -\frac{L}{6}\|y-x\|^3 \text { and } R(x,y) \le \frac{L}{6}\|y-x\|^3
\]
which implies using $\Theta\geq L$, for all $y$, we get  
\begin{equation*}
    f(x)+\langle\nabla f(x),y-x\rangle+\frac12\langle\nabla^2f(x)(y-x),y-x\rangle-\frac{\Theta}{6}\|y-x\|^3 \leq f(y)
    \end{equation*}
    \text{ and }
    \begin{equation*}
    f(x)+\langle\nabla f(x),y-x\rangle+\frac12\langle\nabla^2f(x)(y-x),y-x\rangle+\frac{\Theta}{6}\|y-x\|^3\geq f(y).
\end{equation*}
Hence, $m_{x}(y)\leq f(y)$ and $M_{x}(y)\geq f(y)$ for all $y\in P$ and the proof is complete.
\end{proof}

Although the choice $\Theta=L$ is sufficient to guarantee the validity of the cubic underestimator and overestimator given in Definition \ref{def:undover}, we introduce $\Theta$ as a general parameter satisfying $\Theta\geq L$. The underestimator $m_{x}$ can be nonconvex and finding its global minimum can be computationally expensive. Therefore, we derive conditions on $\Theta$ such that the underestimator $m_{x}$ is edge-concave on the box P. This allows us to exploit the vertex property of edge-concave functions and compute lower bounds within the branch-and-bound framework. Thus, $\Theta$ provides additional flexibility to ensure the edge-concavity of the underestimator while maintaining its validity as an underestimator.

Next, we prove that the cubic underestimator $m_x$ is edge-concave for suitable values of $\Theta$. 
 We start from the definition of an edge-concave function \cite{tardella2003polyhedral}.

\begin{definition}
    ([Remark 3.2 \cite{tardella2003polyhedral}]) A function $g: \mathbb{R}^{n}\rightarrow\mathbb{R}$ is said to be edge-concave on a box set $P$ if and only if $g$ is componentwise concave on the box set $P$.
\end{definition}

\begin{theorem} [Sufficient Condition for Edge Concavity]
\label{thm:edgeconcavity}
For any fixed $x\in P$, if $\Theta$ is chosen such that 
\begin{equation}
\Theta \ge \max\left\{ L,\; \frac{3}{5} \frac{\displaystyle\max_{1\le i\le n} \frac{\partial^2 f}{\partial x_i^2}(x)} {\omega(P)} \right\},
\label{eq:theta-condition}
\end{equation}
then the cubic underestimator $m_x$ is edge-concave on $P$.
\end{theorem}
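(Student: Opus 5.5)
The plan is to reduce edge-concavity to a sign condition on the diagonal second partial derivatives of $m_x$, using the characterization of edge-concavity as componentwise concavity on $P$. Fix $x\in P$, write $d=y-x$ and $a_{ii}=\frac{\partial^2 f}{\partial x_i^2}(x)$. The quadratic part of $m_x$ contributes exactly $a_{ii}$ to $\partial^2 m_x/\partial y_i^2$, and the linear part contributes nothing. For the cubic term, differentiating $\|d\|^3$ twice in $d_i$ (valid for $d\neq 0$) gives $3\bigl(\|d\|+d_i^2/\|d\|\bigr)$. Hence
\[
\frac{\partial^2 m_x}{\partial y_i^2}(y)=a_{ii}-\frac{\Theta}{2}\Bigl(\|d\|+\frac{d_i^2}{\|d\|}\Bigr),\qquad d\neq 0 .
\]
By this formula, componentwise concavity on $P$ amounts to requiring, for every $i$ and every $y\in P$,
\[
\Theta\Bigl(\|d\|+\frac{d_i^2}{\|d\|}\Bigr)\ge 2a_{ii}.
\]
When $a_{ii}\le 0$ this holds trivially, so the only work is for $i$ with $a_{ii}>0$.

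For those indices, the next step is to compare the bracket with $\omega(P)$. Since $\|d\|\le\omega(P)$ and $d_i^2/\|d\|\le\|d\|$, the bracket is at most $2\omega(P)$. So $\omega(P)$ can only enter as an upper bound on the quantity that must be large, not as a lower bound. This means a purely pointwise Hessian argument cannot produce \eqref{eq:theta-condition}: one needs a lower bound on $\|y-x\|$ over the relevant edges, of the form $\|d\|+d_i^2/\|d\|\ge\frac{10}{3}\omega(P)$. The specific constant $\frac35$ suggests that the intended argument evaluates the cubic term at a distance proportional to $\omega(P)$, or uses an averaged (integral or chord) form of concavity along each edge. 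The $\max$ with $L$ in \eqref{eq:theta-condition} is only there to keep $m_x$ a valid underestimator by Theorem \ref{thm:under}.

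The main obstacle is the degenerate region near $y=x$, where the cubic correction vanishes to second order. Along the line through $x$ in direction $e_i$, the restriction is $t\mapsto \frac{a_{ii}}{2}t^2-\frac{\Theta}{6}|t|^3$, which is strictly convex near $t=0$ whenever $a_{ii}>0$, no matter how large $\Theta$ is. Any such line meets the box, since $x\in P$. The same failure occurs if $x$ is a vertex, because the one-sided restriction $t\in[0,w_i]$ is still convex near $t=0$. So the theorem cannot hold literally for all $x\in P$ with $\max_i a_{ii}>0$.

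My first attempt would therefore be to prove it under a modified hypothesis and recover the constant there. One option is to restrict the verification to the edge segments of $P$ that stay at distance bounded below by a fixed fraction of $\omega(P)$ from $x$. Another is to replace pointwise concavity by the chord inequality on each edge $[x_i^L,x_i^U]$, integrate the second-derivative formula, and see whether $\frac35$ emerges. If neither works, the statement needs an additional requirement: $x$ must be placed so that $\|y-x\|$ is bounded away from zero on every coordinate line through points of $P$ where concavity is required, or else $\max_i a_{ii}\le 0$.
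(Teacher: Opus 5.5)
Your conclusion is correct. The statement is false whenever $\max_i a_{ii}>0$, where $a_{ii}=\frac{\partial^2 f}{\partial x_i^2}(x)$, and it holds trivially for every $\Theta\ge 0$ when $\max_i a_{ii}\le 0$. So there is no proof to find, and it is the paper's argument that fails, not yours.

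Your line-through-$x$ argument is a valid counterexample. The map $\phi(t)=m_x(x+te_i)$ is $C^2$ with $\phi''(t)=a_{ii}-\Theta|t|$, so $\phi''(0)=a_{ii}>0$ for every $\Theta$. Your bracket estimate gives more than you drew from it. Take the admissible choice $\Theta=\frac35 a_{ii}/\omega(P)$, for the maximizing index $i$, when this exceeds $L$. Then $\partial^2 m_x/\partial y_i^2\ge a_{ii}-\Theta\,\omega(P)=\frac25 a_{ii}>0$ on all of $P$, so $m_x$ is strictly convex along $e_i$ throughout the box, not only near $x$.

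A concrete instance: $n=1$, $P=[-1,1]$, $f(y)=\frac12y^2+\frac16y^3$ (so $L=1$), $x=0$ (the midpoint used in Algorithm~\ref{alg:lowerbound}), and $\Theta=1$, which satisfies \eqref{eq:theta-condition}. Then $m_0(y)=\frac12y^2-\frac16|y|^3$ is convex on $P$. Its vertex value $\frac13$ exceeds $\min_P f=f(0)=0$, so the vertex bound is not even a valid lower bound.

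The paper's proof reverses the logic twice. It starts from the tangent inequality \eqref{eq:edgecnv1} and derives consequences of it. After the cancellations, which are correct, the target is equivalent to
\[
\tfrac12 a_{ii}(t_1-t_2)^2\le\tfrac{\Theta}{6}\|d_1\|^3-\tfrac{\Theta}{6}\|d_2\|^3-\tfrac{\Theta}{2}\|d_2\|\,(d_2)_i\,(t_1-t_2),\qquad d_k=y+t_ke_i-x .
\]
The first reversal is the passage to \eqref{ec1}. It flips the sign of the $\|d_2\|^3$ term and replaces the signed product by its absolute value, which enlarges the right-hand side. So \eqref{ec1} is a necessary consequence of concavity, not a sufficient condition for it.

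The second reversal comes next. The paper bounds that enlarged right-hand side from above by $\frac{5\Theta}{6}\omega(P)^3$ and imposes $\frac12 a_{ii}\omega(P)^2\le\frac{5\Theta}{6}\omega(P)^3$. This again goes the wrong way, since proving $A\le B$ needs a lower bound on $B$. The constant $\frac35=\frac{1/2}{5/6}$ is exactly the output of these reversed steps, as you suspected.

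The true right-hand side is the Taylor remainder of $\frac{\Theta}{6}\|d(t)\|^3$. It is at most $\frac{\Theta}{2}\max_s\|d(s)\|\,(t_1-t_2)^2$, with $s$ ranging between $t_1$ and $t_2$. So concavity forces $a_{ii}\le\Theta\max_s\|d(s)\|$, which fails on short segments through $x$. This is your obstruction, and no constant in \eqref{eq:theta-condition} removes it.

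On your repair ideas, restricting to segments far from $x$ cannot serve the algorithm: in the example, the minimizer of $m_x$ over $P$ is $x$ itself. The chord idea is the sensible one. Suppose $m_x$ lies above its chord on every axis-parallel chord of $P$. Moving one coordinate at a time to an endpoint then gives $\min_P m_x=\min_{V(P)}m_x$, which is all Algorithm~\ref{alg:lowerbound} needs. But this property is weaker than edge-concavity, so it changes the theorem, and it yields a different threshold. In one dimension with midpoint expansion, $m_x$ lies above its chord on $P$ if and only if $\Theta\ge 6a_{11}/\omega(P)$, which is ten times the paper's constant.
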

\begin{proof}
Let $\phi :\mathbb{R}\rightarrow\mathbb{R}$ be a function given by $\phi (t)=m_x(y+te_{i})$, where  $e_i=(0,\ldots,1,\ldots,0)$ (\text{i.e.}, 1 at the $i$-th position) \text{ and is chosen such that } $y+te_i\in P$. We have
   $ \phi(t)=  f(x)+ \nabla f(x)^{\top }\textbf{u} + \frac{1}{2} \textbf{u}^{\top}\nabla^{2}f(x)\textbf{u}-\frac{\Theta}{6}\lVert y+te_{i}-x\rVert^{3}$,
   where $\textbf{u}=\left[y_{1}-x_{1}^{m}\;y_{2}-x_{2}^{m}\;
\cdots\;y_{i}+t-x_{i}\;\cdots\; y_{n}-x_{n}^{m}
\right]^{\top}$ with  $\nabla f(x)$ and $\nabla^{2} f(x)$ are given in \eqref{eq:Grad} and \eqref{eq:Hessian}, respectively.
    It can be easily seen that 
    \[
   \nabla f(x)^{\top }\textbf{u}=\sum\limits_{\substack{j=1 \\ j\neq i}}^{n} (y_j-x_{j})\frac{\partial f}{\partial x_{j}}(x)+ (y_i+t-x_{i})\frac{\partial f}{\partial x_{i}}(x)
    \] \text{and }
    \[ \textbf{u}^{\top}\nabla^{2}f(x)\textbf{u}=\sum\limits_{\substack{l=1, l\neq i  \\ j=1, j\neq i}}^{n} a_{lj}(y_{l}-x_{l})(y_{j}-x_{j})+2\sum\limits_{l=1, l\neq i}^{n}a_{li}(y_l-x_{l})(y_{i}+t-x_{i})+ a_{ii}(y_{i}+t-x_{i})^{2}.
    \]
    Thus, we have
    \begin{align*}
     \phi(t)
    = &f(x)+\sum\limits_{\substack{j=1 \\ j\neq i}}^{n} (y_j-x_{j})\frac{\partial f}{\partial x_{j}}(x)+ (y_i+t-x_{i})\frac{\partial f}{\partial x_{i}}(x)\\
    & + \frac{1}{2}\sum\limits_{\substack{l=1, l\neq i  \\ j=1, j\neq i}}^{n} a_{lj}(y_{l}-x_{l})(y_{j}-x_{j}) + \sum\limits_{l=1, l\neq i}^{n}a_{li}(y_l-x_{l})(y_{i}+t-x_{i})\\
     &  + \frac{1}{2} a_{ii}(y_{i}+t-x_{i})^{2}-\frac{\Theta}{6}\lVert y+te_{i}-x_{i}\rVert^{3}.    
    \end{align*}
To prove $m_{x}(y)$ is  edge-concave, we need to choose $\Theta$ such that $\phi$ is concave for each $i=1,2,\ldots, n$. To prove that $\phi$ is concave, it is sufficient to show that 
\begin{equation}\label{eq:edgecnv1}
\phi (t_{1})\leq \phi(t_2)+\phi'(t_2)(t_1-t_2) \text{ for any } t_{1}, t_{2} \text{ such that } y+t_1e_i\in P \text { and } y+t_2e_i\in P.
\end{equation}
Now we find the sufficient condition on $\Theta $ under which \eqref{eq:edgecnv1} holds. From \eqref{eq:edgecnv1}, we have
\begin{align}
    &f(x)+\sum\limits_{\substack{j=1 \\ j\neq i}}^{n} (y_j-x_{j})\frac{\partial f}{\partial x_{j}}(x)+ (y_i+t_1-x_{i})\frac{\partial f}{\partial x_{i}}(x)\nonumber\\
    &+\frac{1}{2}\sum\limits_{\substack{l=1, l\neq i  \\ j=1, j\neq i}}^{n} a_{lj}(y_{l}-x_{l})(y_{j}-x_{j})+\sum\limits_{l=1, l\neq i}^{n}a_{li}(y_{l}-x_{l})(y_{i}+t_{1}-x_{i})\nonumber\\
    &+ \frac{1}{2}a_{ii}(y_{i}+t_{1}-x_{i})^{2}-\frac{\Theta}{6}\lVert y+t_{1}e_{i}-x\rVert^{3}\nonumber\\
    &\leq f(x)+\sum\limits_{\substack{j=1 \\ j\neq i}}^{n} (y_j-x_{j})\frac{\partial f}{\partial x_{j}}(x)+ (y_i+t_2-x_{i})\frac{\partial f}{\partial x_{i}}(x)\nonumber\\
    &+\frac{1}{2}\sum\limits_{\substack{l=1, l\neq i  \\ j=1, j\neq i}}^{n} a_{lj}(y_{l}-x_{l})(y_{j}-x_{j})+\sum\limits_{l=1, l\neq i}^{n}a_{li}(y_{l}-x_{l})(y_{i}+t_{2}-x_{i})\nonumber\\
    &+\frac{1}{2} a_{ii}(y_{i}+t_{2}-x_{i})^{2}-\frac{\Theta}{6}\lVert y+t_{2}e_{i}-x\rVert^{3}+ \frac{\partial f}{\partial x_{j}}(x) (t_1-t_2) \nonumber\\
     &+ \sum\limits_{l=1, l\neq i}^{n}a_{li}(y_{l}-x_{l})(t_1-t_2)+ a_{ii} (y_i-x_{i})(t_{1}-t_{2})+a_{ii} t_{2} (t_1-t_2)\nonumber\\
     &~~~~~~~~~~~~~~~~~~~~~~~~~~~~~~~~~~~~~~~~~~~~~~~~~~~~-\frac{\Theta}{2}\lVert y+t_{2}e_{i}-x\rVert (y_{i}+t_{2}-x)(t_{1}-t_{2})\nonumber\\
    \implies&  (y_i+t_1-x_{i})\frac{\partial f}{\partial x_{i}}(x)+\sum\limits_{l=1, l\neq i}^{n}a_{li}(y_{l}-x_{l})(y_{i}+t_{1}-x_{i})+ \frac{1}{2}a_{ii}(y_{i}+t_{1}-x_{i})^{2}\nonumber\\
    &~~~~~~~~~~~~~~~~~~~~~~~~~~~~~~~~~~~~~~~~~~~~~~~~~~~~~~~~~~~~~~~~~~~~~~~~~~~~~~~~~-\frac{\Theta}{6}\lVert y+t_{1}e_{i}-x\rVert^{3}\nonumber\\
    & \leq (y_i+t_2-x_{i})\frac{\partial f}{\partial x_{i}}(x)+\sum\limits_{l=1, l\neq i}^{n}a_{li}(y_{l}-x_{l})(y_{i}+t_{2}-x_{i})+ \frac{1}{2} a_{ii}(y_{i}+t_{2}-x_{i})^{2}\nonumber\\
     &-\frac{\Theta}{6}\lVert y+t_{2}e_{i}-x\rVert^{3} + \frac{\partial f}{\partial x_{j}}(x) (t_1-t_2) + \sum\limits_{l=1, l\neq i}^{n}a_{li}(y_{l}-x_{l})(t_1-t_2)\nonumber\\
     &+ a_{ii} (y_i-x_{i})(t_{1}-t_{2})+a_{ii} t_{2} (t_1-t_2)-
     \frac{\Theta}{2}\lVert y+t_{2}e_{i}-x\rVert (y_{i}+t_{2}-x_{i})(t_{1}-t_{2})\nonumber\\
    \implies & (y_i-x_{i})\frac{\partial f}{\partial x_{i}}(x)+t_1\frac{\partial f}{\partial x_{i}}(x)+\sum\limits_{l=1, l\neq i}^{n}a_{li}(y_{l}-x_{l})(y_{i}-x_{i})+\sum\limits_{l=1, l\neq i}^{n}a_{li}(y_{l}-x_{l})t_1\nonumber\\
    &+\frac{1}{2}a_{ii}((y_i-x_{i})^{2}+2(y_{i}-x_{i})t_1+t_{1}^{2})-\frac{\Theta}{6}\lVert y+t_{1}e_{i}-x\rVert^{3}\nonumber\\
    & \leq  (y_i-x_{i})\frac{\partial f}{\partial x_{i}}(x)+t_2\frac{\partial f}{\partial x_{i}}(x)+\sum\limits_{l=1, l\neq i}^{n}a_{li}(y_{l}-x_{l})(y_{i}-x_{i})\nonumber\\
    &+\sum\limits_{l=1, l\neq i}^{n}a_{li}(y_{l}-x_{l})t_2+\frac{1}{2}a_{ii}((y_i-x_{i})^{2}+2(y_{i}-x_{i})t_2+t_{2}^{2})-\frac{\Theta}{6}\lVert y+t_{2}e_{i}-x\rVert^{3}\nonumber\\
    & + \frac{\partial f}{\partial x_{j}}(x) (t_1-t_2) + \sum\limits_{l=1, l\neq i}^{n}a_{li}(y_{l}-x_{l})(t_1-t_2) + a_{ii} (y_i-x_{i})(t_{1}-t_{2})\nonumber\\
     &+a_{ii} t_{2} (t_1-t_2)- \frac{\Theta}{2}\lVert y+t_{2}e_{i}-x\rVert (y_{i}+t_{2}-x_{i})(t_{1}-t_{2})\nonumber\\
     \implies &\frac{1}{2}a_{ii}t_{1}^{2}-\frac{\Theta}{6}\lVert y+t_{1}e_{i}-x\rVert^{3} \leq \frac{1}{2}a_{ii}t_{2}^{2}-\frac{\Theta}{6}\lVert y+t_{2}e_{i}-x\rVert^{3}+ a_{ii} t_{2} (t_1-t_2)\nonumber\\
     &~~~~~~~~~~~~~~~~~~~~~~~~~~~~~~~~~~~~~~~~~- \frac{\Theta}{2}\lVert y+t_{2}e_{i}-x\rVert (y_{i}+t_{2}-x_{i})(t_{1}-t_{2})\nonumber\\
     \implies & \frac{1}{2} a_{ii}(t_{1}^{2}-t_{2}^{2}-2t_{2}(t_{1}-t_{2}))\leq \frac{\Theta}{6}\lVert y+t_{1}e_{i}-x\rVert^{3}-\frac{\Theta}{6}\lVert y+t_{2}e_{i}-x\rVert^{3}\nonumber\\
     &~~~~~~~~~~~~~~~~~~~~~~~~~~~~~~~~~~~~~~~~~- \frac{\Theta}{2}\lVert y+t_{2}e_{i}-x\rVert (y_{i}+t_{2}-x_{i})(t_{1}-t_{2})\nonumber\\
     \implies & \frac{1}{2}a_{ii} (t_{1}-t_{2})^{2}\leq \frac{\Theta}{6}\lVert y+t_{1}e_{i}-x\rVert^{3}+\frac{\Theta}{6}\lVert y+t_{2}e_{i}-x\rVert^{3}\nonumber\\
    &~~~~~~~~~~~~~~~~~~~~~~~~~~~~~~~~~~~~~~~~~+ \frac{\Theta}{2}\lVert y+t_{2}e_{i}-x\rVert \vert y_{i}+t_{2}-x_{i}\rvert \lvert t_{1}-t_{2}\rvert.\label{ec1}
\end{align}
Note that $t_{1}$ and $t_{2}$ are chosen such that $y+t_{1}e_{i}\in P$ and $y+t_{2}e_{i}\in P$. Therefore, we have 
\begin{equation}\label{bound}
\lVert y+t_{1}e_{i}-x\rVert \leq \omega (P), ~  (t_{1}-t_{2})^{2}\leq \omega (P)^{2} \text{ and } \lvert t_{1}-t_{2}\rvert \leq \omega (P). 
\end{equation}
From \eqref{ec1} and \eqref{bound}, we get 
\begin{align}\label{ec2}
    \frac{1}{2}a_{ii} (t_{1}-t_{2})^{2}&\leq \frac{\Theta}{6} \omega (P)^{3}+\frac{\Theta}{6} \omega (P)^{3}+ \frac{\Theta}{2}\omega(P)^{3}\nonumber\\
    &= \frac{5\Theta}{6}\omega (P)^{3}.
\end{align}
Since $\Theta\geq L\geq 0$, therefore \eqref{ec2} is satisfied for $a_{ii}\leq0$. Now we consider $a_{ii}\geq0$. Our aim is to find a $\Theta$ such that \eqref{ec2} holds. Thus, if we find the $\Theta$ such that it satisfies
\begin{align*}
    \frac{1}{2}a_{ii} \omega(P)^{2}\leq \frac{5\Theta}{6}\omega (P)^{3},
\end{align*}
then it satisfies \eqref{ec2} automatically as $(t_{1}-t_{2})^{2}\leq \omega (P)^{2}$ and $a_{ii}\geq0$. Then, 
\begin{align*}
    \frac{1}{2}a_{ii} \omega(P)^{2}\leq \frac{5\Theta}{6}  \omega (P)^{3}
\end{align*}
which implies that 
\[
\Theta \geq \frac{3}{5} \frac{a_{ii}}{\omega (P)}.
\]
Hence, if we choose $\Theta$ such that $\Theta=\max\left\{\frac{3}{5} \frac{a_{ii}}{\omega (P)},L\right\}$ where $a_{ii}= \frac{\partial^2 f}{\partial x_i^2}(x)$, then $\phi(t)$ is concave along the $i$-th coordinate. Subsequently, if we take $\Theta$ such that  $\Theta=\max\left\{\frac{3}{5} \frac{\textbf{a}}{\omega (P)},L\right\}$, where $\textbf{a}=\max {a_{ii}}$ for each $i=1,2,\ldots,n$, then $\phi$ is componentwise concave, which implies that $m_{x}$ is edge-concave.
\end{proof}
 For the rest of the analysis in the paper, we use $\Theta$ such that it satisfies the condition \eqref{eq:theta-condition}.


\begin{remark}
A key property of edge-concave functions is that their minimum over a box is attained at a vertex. In particular, if $g$ is edge-concave on
$P$, then
\begin{equation*}
    \min_{x\in P}g(x) =\min_{v\in V(P)}g(v).
\end{equation*}
Therefore, an edge-concave underestimator provides a lower bound that can be computed by evaluating the underestimator at the vertices of the box \cite{tardella1988class}.
\end{remark}

\section{Convergence analysis}\label{sec:convergence}

In this section, we analyze the estimation error associated with the proposed cubic estimator scheme. We first establish bounds on the errors of the cubic underestimator and overestimator. We then use these error bounds to establish pointwise and Hausdorff convergence of order three.

For each $Y\subseteq P$, let $x_{Y}$ be a fixed point in $Y$.
Then, the family
\begin{equation}\label{eq:schemes}
(m_{x_Y},M_{x_Y})_{Y\subseteq P}
\end{equation}
is called the scheme of cubic estimators, where $m_{x_{Y}}$ and $M_{x_{Y}}$ denote the underestimator and overestimator of $f$ over $Y$, respectively.

 The pointwise convergence order shows how quickly the estimators become close to the objective function as the size of the box decreases. The Hausdorff convergence order measures the rate at which the interval enclosure generated by the estimators approaches the exact range (the set of all function values attained by the objective function over the considered box) of the objective function.
 In \cite{bompadre2012convergence}, both the pointwise and Hausdorff convergence orders were proven for a scheme of estimators in which the underestimator is convex and the overestimator is concave. However, the proposed scheme \eqref{eq:schemes} does not, in general, satisfy these convexity and concavity properties. Therefore, the convergence theory developed for convex underestimator/concave overestimator schemes in \cite{bompadre2012convergence} is not directly applicable.
Instead, we follow the generalized framework introduced in
\cite{bompadre2013convergence} for parameterized schemes of estimators to analyse the pointwise and Hausdorff convergence orders of the proposed scheme \eqref{eq:schemes}.

\subsection{Pointwise convergence of the scheme of estimators}
 \begin{definition}[Pointwise convergence order \cite{bompadre2013convergence}]\label{ptcgs}
The scheme $(m_{x_Y},M_{x_Y})_{Y\subseteq P}$ of $f$ is said to have {pointwise convergence of order $\gamma$} on $P$ if there exists a constant $\tau>0$ such that
\[
\sup_{y\in Y} |f(y)-m_{x_Y}(y)| \le \tau\,w(Y)^\gamma, \text{ and } \sup_{y\in Y} |M_{x_Y}(y)-f(y)| \le \tau\,w(Y)^\gamma,
\]
 for every $Y\subseteq P$ and for any fixed $x_{Y}\in Y$.
\end{definition}
The next result shows that the estimation error of both the underestimator and the overestimator decreases cubically with the distance from the expansion point.
\begin{lemma}
\label{thm:pointwise}
 Let $x\in P$ be any fixed expansion point. Then, for every $y\in P$,
\begin{equation*}
0 \le f(y)-m_x(y) \le \left(\frac{L+\Theta}{6}\right)\|y-x\|^3,
\label{eq:undererror} \text{ and } 0\le M_x(y)-f(y) \le \left(\frac{L+\Theta}{6}\right)\|y-x\|^3,
\end{equation*}
Therefore, as $y\rightarrow x$, we have
\[ f(y)-m_x(y) = \text{ O}(\|y-x\|^3), \text{ and } M_x(y)-f(y) = \text{O}(\|y-x\|^3).
\]
\end{lemma}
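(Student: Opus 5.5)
The plan is to rewrite both errors in terms of the Taylor remainder
\[
R(x,y)=f(y)-f(x)-\langle\nabla f(x),y-x\rangle-\tfrac12\langle\nabla^2 f(x)(y-x),y-x\rangle,
\]
which already appears in the proof of Theorem \ref{thm:under}. Directly from Definition \ref{def:undover},
\[
f(y)-m_x(y)=R(x,y)+\frac{\Theta}{6}\|y-x\|^3,\qquad M_x(y)-f(y)=\frac{\Theta}{6}\|y-x\|^3-R(x,y).
\]
Both errors are therefore the cubic correction term shifted by $\pm R(x,y)$, and the whole proof reduces to controlling $R$.

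The lower bounds $0\le f(y)-m_x(y)$ and $0\le M_x(y)-f(y)$ are exactly the content of Theorem \ref{thm:under}, so I will simply invoke it. Alternatively, they follow from $|R(x,y)|\le \frac{L}{6}\|y-x\|^3$ together with $\Theta\ge L$.

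For the upper bounds I will use the same estimate \eqref{eq:taylorbound} of Lemma \ref{lem:taylor}, now on the other side. From $R(x,y)\le \frac{L}{6}\|y-x\|^3$ we get
\[
f(y)-m_x(y)\le \frac{L+\Theta}{6}\|y-x\|^3,
\]
and from $-R(x,y)\le \frac{L}{6}\|y-x\|^3$ we get the same bound for $M_x(y)-f(y)$. These hold for every $y\in P$, since Lemma \ref{lem:taylor} applies to all pairs $x,y\in P$.

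The asymptotic statements then follow immediately. For fixed $x$, the factor $(L+\Theta)/6$ is a constant independent of $y$, so both errors are $\mathrm{O}(\|y-x\|^3)$ as $y\to x$.

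The only point needing care is that $\Theta$ must not depend on $y$. By \eqref{eq:theta-condition}, $\Theta$ depends on $x$ and $\omega(P)$ only, so it is fixed once $x$ and $P$ are fixed. I expect no real obstacle beyond this observation. The argument is a one-line consequence of Lemma \ref{lem:taylor} applied in both directions.
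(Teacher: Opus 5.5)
Your proposal is correct and follows the paper's proof: you write both errors as $\frac{\Theta}{6}\|y-x\|^3 \pm R(x,y)$ and apply the two-sided remainder bound \eqref{eq:taylorbound} from Lemma \ref{lem:taylor}. Your explicit appeal to Theorem \ref{thm:under} for the nonnegativity parts is a small improvement, since the paper only derives the absolute-value bounds and leaves the inequalities $0\le f(y)-m_x(y)$ and $0\le M_x(y)-f(y)$ implicit.
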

\begin{proof}
From Lemma \ref{lem:taylor}, we have
\begin{equation}\label{eq:boundRxy}
-\frac{L}{6}\|y-x\|^3 \le R(x,y)\le\frac{L}{6}\|y-x\|^3,
\end{equation}
where $R(x,y)=f(y)-f(x)-\langle\nabla f(x),y-x\rangle - \frac12 \langle \nabla^2f(x)(y-x),y-x \rangle $.
From Definition \ref{def:undover}, we have
\[
m_x(y)= f(x) + \langle\nabla f(x),y-x\rangle + \frac12 \langle \nabla^2f(x)(y-x), y-x\rangle -\frac{\Theta}{6}\|y-x\|^3,
\]
which implies along with \eqref{eq:boundRxy} that
\[
f(y)-m_x(y) = R(x,y) +\frac{\Theta}{6}\|y-x\|^3 \leq \left(\frac{L+\Theta}{6}\right)\|y-x\|^3.
\]
Therefore, we get
\begin{equation}\label{eq:mxbound}
\lvert f(y)-m_x(y)\rvert\le\left(\frac{L+\Theta}{6}\right)\|y-x\|^3.
\end{equation}
Similarly using Definition \ref{def:undover} and \eqref{eq:boundRxy}, we have
\[
M_x(y)-f(y) =\frac{\Theta}{6}\|y-x\|^3 -R(x,y)\leq \left(\frac{L+\Theta}{6}\right) \|y-x\|^3
\]
which yields
\begin{equation}\label{eq:MXbound}
\lvert M_x(y)-f(y)\rvert \le \left(\frac{L+\Theta}{6}\right) \|y-x\|^3.
\end{equation}
Thus,  \eqref{eq:mxbound} and \eqref{eq:MXbound} imply, as $y\rightarrow x$,
\[
f(y)-m_x(y)=\text{O}(\|y-x\|^3),
\text{ and }
M_x(y)-f(y)=\text{O}(\|y-x\|^3)
\]
and the proof is complete.
\end{proof}

 The previous result immediately yields the following pointwise convergence theorem.

\begin{theorem}\label{thmptcgs}
For any $Y\subseteq P$ and a fixed point $x_{Y}\in Y$,
 the scheme of cubic estimators $(m_{x_Y},M_{x_Y})_{Y\subseteq P}$ has pointwise convergence of order three.
\end{theorem}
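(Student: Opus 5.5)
The plan is to read the statement off from Lemma~\ref{thm:pointwise} by specialising the expansion point to $x_Y$ and taking suprema over $Y$. This yields the two inequalities of Definition~\ref{ptcgs} with $\gamma=3$. The only real task is to produce a constant $\tau>0$ that is independent of $Y$.

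First, fix $Y\subseteq P$ and $x_Y\in Y$, and apply Lemma~\ref{thm:pointwise} with $x=x_Y$. Since $Y\subseteq P$, for every $y\in Y$ this gives
\[
0\le f(y)-m_{x_Y}(y)\le \frac{L+\Theta}{6}\|y-x_Y\|^3 ,\qquad 0\le M_{x_Y}(y)-f(y)\le \frac{L+\Theta}{6}\|y-x_Y\|^3 .
\]
Second, because both $y$ and $x_Y$ lie in $Y$, the definition of the diameter gives $\|y-x_Y\|\le \omega(Y)$. Taking the supremum over $y\in Y$ then yields
\[
\sup_{y\in Y}|f(y)-m_{x_Y}(y)|\le \frac{L+\Theta}{6}\,\omega(Y)^3 ,
\]
and the same bound for $M_{x_Y}-f$. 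Third, we set $\tau=(L+\Theta)/6$, which is positive because $L>0$ and $\Theta\ge L$. This is exactly Definition~\ref{ptcgs} with $\gamma=3$, for an arbitrary $Y$ and an arbitrary choice of $x_Y\in Y$.

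The main obstacle is the uniformity of $\tau$ in $Y$. This requires $\Theta$ to be a fixed parameter, or at least bounded by some $\bar\Theta$ over all $Y\subseteq P$, in which case we take $\tau=(L+\bar\Theta)/6$. The statement should therefore be understood for the scheme with a common $\Theta\ge L$, which is all that Theorem~\ref{thm:under} needs for validity.

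If instead $\Theta$ were chosen on each $Y$ at the edge-concavity threshold of Theorem~\ref{thm:edgeconcavity}, this would fail. That threshold is $\tfrac35\max_i \partial^2_{ii} f(x_Y)/\omega(Y)$, which can grow like $1/\omega(Y)$ whenever some diagonal Hessian entry is positive. The bound above would then only give $O(\omega(Y)^2)$. So in the write-up I would state explicitly that $\Theta$ is held fixed, or uniformly bounded, across the family $(m_{x_Y},M_{x_Y})_{Y\subseteq P}$, and record that under this proviso the remaining steps are immediate from Lemma~\ref{thm:pointwise}.
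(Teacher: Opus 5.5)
Your proof is correct and is essentially the paper's argument: apply Lemma~\ref{thm:pointwise} at $x=x_Y$, use $\|y-x_Y\|\le\omega(Y)$, take suprema over $Y$, and set $\tau=(L+\Theta)/6$ and $\gamma=3$. Your caveat that $\Theta$ must be held fixed (or uniformly bounded) over all $Y\subseteq P$ is well founded; the paper leaves it implicit, addressing it only in its closing remark that $\Theta=L$ suffices. Indeed, with the per-box edge-concavity threshold $\Theta_Y\propto 1/\omega(Y)$, the actual error at the vertex farthest from $x_Y$ is of order $\omega(Y)^2$ whenever some $\partial^2 f/\partial x_i^2(x_Y)$ stays positive, so the order genuinely drops to two.
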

\begin{proof}
From Lemma \ref{thm:pointwise} for any $y\in Y$, we have
\begin{equation}\label{eq:mxMxbound}
|f(y)-m_{x_{Y}}(y)| \le \left(\frac{L+\Theta}{6}\right) \|y-x_{Y}\|^3 \text{ and } |M_{x_{Y}}(y)-f(y)| \le\left(\frac{L+\Theta}{6}\right) \|y-x_{Y}\|^3
\end{equation}
Since $x_{Y}\in Y$, therefore
\[
\|y-x_{Y}\| \le \omega(Y), \text{ for all }  y\in Y,
\]
which implies using \eqref{eq:mxMxbound} that
\begin{equation}\label{eq:mxMxboundcgs}
|f(y)-m_{x_{Y}}(y)| \le \left(\frac{L+\Theta}{6}\right)\omega(Y)^3 \text{ and } |M_{x_{Y}}-f(y)| \le \left(\frac{L+\Theta}{6}\right)\omega(Y)^3.
\end{equation}
Taking the supremum over $Y$ in \eqref{eq:mxMxboundcgs}, we get 
\begin{equation}\label{eq:mxMxboundcgs1}
\sup_{y\in Y} |f(y)-m_{x_Y}(y)| \le \left(\frac{L+\Theta}{6}\right)\omega(Y)^3
\text{ and }
\sup_{y\in Y} |M_{x_{Y}}(y)-f(y)| \le \left(\frac{L+\Theta}{6}\right)\omega(Y)^3.
\end{equation}
Hence, Definition \ref{ptcgs} holds with
\[
\tau=\frac{L+\Theta}{6} \text{ and } \gamma=3.
\]
Thus, the scheme of cubic estimators $(m_{x_Y},M_{x_Y})_{Y\subseteq P}$ has pointwise convergence of order three.
\end{proof}

\begin{remark} \label{rem:midpoint}
The bound in Theorem \ref{thmptcgs} can be improved by choosing $x_Y$ as the midpoint of $Y$. In this case,
\begin{equation}\label{eq:rmrkcgs}
\|y-x_Y\|\leq \frac{\omega(Y)}{2},
\text{ for all }y\in Y.
\end{equation}
Therefore, using \eqref{eq:mxMxboundcgs} and \eqref{eq:rmrkcgs}
\begin{equation}\label{eq:rmrkptcgs}
\sup_{y\in Y}|f(y)-m_{x_Y}(y)| \leq \left(\frac{L+\Theta}{48}\right)\omega(Y)^3 \text{ and } \sup_{y\in Y}|M_{x_Y}(y)-f(y)| \leq \left(\frac{L+\Theta}{48}\right)\omega(Y)^3.
\end{equation}
Thus, when the midpoint of $Y$ is used as the expansion point, the constant $\tau$ in the pointwise convergence bound is reduced from $\frac{L+\Theta}{6}$ to $\frac{L+\Theta}{48}$. Since the latter provides a tighter error bound, the midpoint is used as the expansion point in our implementation.
\end{remark}

\subsection{Hausdorff convergence of the scheme of estimators}

We establish that the proposed scheme of cubic estimators has Hausdorff convergence of order three. To this end, we use the approximation error bounds obtained in the previous subsection. We first introduce the inclusion mapping associated with the cubic estimator scheme and recall the Hausdorff metric for compact sets. We then define Hausdorff convergence order and prove that the proposed scheme attains order three.

For each $Y\subseteq P$, let $
\overline f(Y) = \left[ \min\limits_{y\in Y}f(y), \; \max\limits_{y\in Y}f(y) \right]$
denote the image of $Y$ under $f$. Note that the cubic estimator scheme defines the interval-valued map such that 
$H(Y) =  \left[ \min\limits_{y\in Y}m_{x_Y}(y), \; \max\limits_{y\in Y}M_{x_Y}(y) \right].$
Since $m_{x_Y}$ is an underestimator and $M_{x_Y}$ is an overestimator of $f$ on $Y$, $\overline f(Y) \subseteq H(Y)$.
Thus, $H(Y)$ provides an interval enclosure of the range of $f$ over $Y$.
\begin{definition}[Hausdorff metric \cite{bompadre2012convergence}] \label{hauscgsdef}
Let $A,B\subseteq\mathbb{R}$ be two nonempty compact sets.
The Hausdorff metric between $A$ and $B$ is defined by
\[
q(A,B)=\max\left\{\sup_{a\in A}\inf_{b\in B}|a-b|,
\;\sup_{b\in B}\inf_{a\in A}|a-b|\right\}.
\]
In particular, if $A=[a_1,a_2]$ and $B=[b_1,b_2]$ are compact intervals, then
\[
q(A,B)=\max\left\{|a_1-b_1|,|a_2-b_2|\right\}.
\]
\end{definition}

\begin{definition}[Hausdorff convergence order \cite{bompadre2013convergence}]\label{hauscgs}
The scheme of estimators $(m_{x_Y},M_{x_Y})_{Y\subseteq P}$ is said to have {Hausdorff convergence of order} $\beta>0$
on $P$ if there exists a constant $\tau>0$ such that
\[
q(\overline f(Y),H(Y)) \le \tau\,w(Y)^\beta,
\]
for every $Y\subseteq P$ and for any fixed $x_{Y}\in Y$.
\end{definition}

\begin{theorem}\label{thm:hausdorff}
For any $Y\subseteq P$ and a fixed point $x_{Y}\in Y$, the  proposed scheme of cubic estimators $(m_{x_Y},M_{x_Y})_{Y\subseteq P}$ has Hausdorff convergence of order three.
\end{theorem}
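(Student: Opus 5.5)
The plan is to reduce the Hausdorff distance to the pointwise bounds of Theorem \ref{thmptcgs}, using the explicit formula for the Hausdorff metric between compact intervals in Definition \ref{hauscgsdef}. Write $\overline f(Y)=[\underline{f},\overline{f}]$ with $\underline f=\min_{y\in Y}f(y)$ and $\overline f=\max_{y\in Y}f(y)$. Write $H(Y)=[\underline{m},\overline{M}]$ with $\underline m=\min_{y\in Y}m_{x_Y}(y)$ and $\overline M=\max_{y\in Y}M_{x_Y}(y)$. These minima and maxima exist because $Y$ is compact (we take $Y$ to be a subbox of $P$) and $f$, $m_{x_Y}$, $M_{x_Y}$ are continuous. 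Then
\[
q(\overline f(Y),H(Y))=\max\{|\underline f-\underline m|,\;|\overline f-\overline M|\}.
\]
So it suffices to bound each endpoint discrepancy by $\tau\,\omega(Y)^3$.

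For the lower endpoint, Theorem \ref{thm:under} gives $m_{x_Y}\le f$ on $Y$, hence $\underline m\le \underline f$ and $|\underline f-\underline m|=\underline f-\underline m$. Let $y^*\in Y$ be a minimizer of $m_{x_Y}$ over $Y$. Then
\[
\underline f-\underline m\le f(y^*)-m_{x_Y}(y^*)\le \sup_{y\in Y}|f(y)-m_{x_Y}(y)|\le \Big(\tfrac{L+\Theta}{6}\Big)\omega(Y)^3,
\]
where the last inequality is \eqref{eq:mxMxboundcgs1}.

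The upper endpoint is symmetric. We have $M_{x_Y}\ge f$, so $\overline M\ge\overline f$. Taking a maximizer $\hat y$ of $M_{x_Y}$ over $Y$,
\[
\overline M-\overline f\le M_{x_Y}(\hat y)-f(\hat y)\le \Big(\tfrac{L+\Theta}{6}\Big)\omega(Y)^3,
\]
again by \eqref{eq:mxMxboundcgs1}. Combining the two bounds gives $q(\overline f(Y),H(Y))\le \frac{L+\Theta}{6}\,\omega(Y)^3$, which is Definition \ref{hauscgs} with $\tau=\frac{L+\Theta}{6}$ and $\beta=3$.

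The main point needing care is that $\tau$ must be independent of $Y$. The parameter $\Theta$ in \eqref{eq:theta-condition} depends on $Y$ through $1/\omega(Y)$ and through the Hessian diagonal at $x_Y$. With $\Theta=\Theta_Y$ chosen as the right-hand side of \eqref{eq:theta-condition}, we have $\Theta_Y\,\omega(Y)^3\le L\,\omega(Y)^3+\frac35 A\,\omega(Y)^2$, where $A=\max_{z\in P}\max_i \partial^2 f/\partial x_i^2(z)$ is finite by continuity. The Hessian term would therefore only give order two as $\omega(Y)\to0$. I will handle this by stating the result for $\Theta$ fixed, independent of $Y$, as in Definition \ref{def:undover} and Theorem \ref{thmptcgs}. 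For small boxes I will note that \eqref{eq:theta-condition} is eventually satisfied by $\Theta=L$ whenever $\frac35 A/\omega(Y)\le L$ fails only on large boxes. On large boxes, where $\omega(Y)$ is bounded below, the constant can be absorbed by enlarging $\tau$, since $\omega(Y)\le\omega(P)$. This uniformity step is the one I expect to be delicate; the endpoint estimates themselves are routine.
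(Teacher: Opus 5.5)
Your endpoint estimates are the paper's proof essentially verbatim: a minimizer $y^\ast$ of $m_{x_Y}$, a maximizer of $M_{x_Y}$, the bound \eqref{eq:mxMxboundcgs1}, the interval formula for $q$, and $\tau=\frac{L+\Theta}{6}$, $\beta=3$. You are also right that this gives a uniform $\tau$ only when $\Theta\ge L$ is a constant independent of $Y$. The paper's proof tacitly assumes this, and its closing remark ($\Theta=L$) makes it explicit. With that convention your argument is complete.

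The reconciliation in your last paragraph, however, has the regimes reversed and should be dropped.
\begin{itemize}
\item For $A>0$, the inequality $\frac35 A/\omega(Y)\le L$ holds exactly when $\omega(Y)\ge 3A/(5L)$, i.e.\ on large boxes. It fails on small boxes, and there the right-hand side of \eqref{eq:theta-condition} grows like $1/\omega(Y)$ whenever $\max_i\frac{\partial^2 f}{\partial x_i^2}(x_Y)$ stays positive.
\item So $\Theta=L$ is not eventually admissible as $\omega(Y)\to0$. Enlarging $\tau$ cannot help, because the small-box regime is the one that determines the order.
\item Order three is genuinely lost for the $Y$-dependent choice. Take $n=1$, $f(y)=y^2$, $Y=[-h,h]$, $x_Y=0$. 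Then \eqref{eq:theta-condition} forces $\Theta_Y\ge\frac{3}{5h}$, so $\max_Y M_{x_Y}\ge M_{x_Y}(h)\ge\frac{11}{10}h^2$, while $\max_Y f=h^2$. This gives $q(\overline f(Y),H(Y))\ge\frac{h^2}{10}=\frac{\omega(Y)^2}{40}$.
\end{itemize}
Thus your estimate $\Theta_Y\,\omega(Y)^3\le L\,\omega(Y)^3+\frac35A\,\omega(Y)^2$ reflects the true behaviour: imposing \eqref{eq:theta-condition} box by box yields only order two.

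There is also no point in tying the convergence theorem to \eqref{eq:theta-condition}. The Hessian of $y\mapsto\|y-x\|^3$ vanishes at $y=x$, so $\nabla^2 m_x(x)=\nabla^2 f(x)$. Hence no finite $\Theta$ makes $m_x$ concave along $e_i$ near $x$ when $\frac{\partial^2 f}{\partial x_i^2}(x)>0$. State the theorem for a fixed $\Theta\ge L$ and leave it there.
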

\begin{proof}
Since $m_{x_Y}(y)\leq f(y)$ for every $y\in Y$, we have
$\min\limits_{y\in Y}m_{x_Y}(y) \leq \min\limits_{y\in Y}f(y)$. Let $y^\ast\in Y$ be such that $m_{x_Y}(y^\ast)=\min\limits_{y\in Y}m_{x_Y}(y)$. Then, we have
\begin{align}
\label{eq:mxboundhsdrf}
0\leq \min_{y\in Y}f(y)-\min_{y\in Y}m_{x_Y}(y) &= \min_{y\in Y}f(y)-m_{x_Y}(y^\ast) \nonumber\\
&\leq f(y^\ast)-m_{x_Y}(y^\ast) \nonumber\\ 
&\leq \sup_{y\in Y}\left(f(y)-m_{x_Y}(y)\right) \nonumber\\
&\leq \left(\frac{L+\Theta}{6}\right)\omega(Y)^3,
\end{align}
where the last inequality follows from \eqref{eq:mxMxboundcgs1}.
Similarly, from $\max\limits_{y\in Y}f(y) \le \max\limits_{y\in Y}M_{x_Y}(y),$ we have
\begin{align}\label{eq:MxboundHsdrf}
0\leq \max_{y\in Y}M_{x_Y}(y)- \max_{y\in Y}f(y) &\le \sup_{y\in Y} \left(M_{x_Y}(y)-f(y) \right)\nonumber\\
&\le \left(\frac{L+\Theta}{6}\right)\omega(Y)^3 \text{ using }\eqref{eq:mxMxboundcgs1}. 
\end{align}
Using Definition \ref{hauscgsdef}, we have 
\[q(\overline f(Y),H(Y)) = \max \left\{ \min_{y\in Y}f(y)-\min_{y\in Y}m_{x_Y}(y), \; \max_{y\in Y}M_{x_Y}(y)-\max_{y\in Y}f(y)
\right\}
\]
which implies, using \eqref{eq:mxboundhsdrf} and \eqref{eq:MxboundHsdrf}, that
\[
q(\overline{f}(Y),H(Y)) \le \left(\frac{L+\Theta}{6}\right)\omega(Y)^3.
\]
Hence, Definition \ref{hauscgs} holds with
\[
\tau=\left(\frac{L+\Theta}{6}\right) 
\text{  and }
\beta=3
\]
Thus, the scheme of cubic estimators $(m_{x_Y},M_{x_Y})_{Y\subseteq P}$ has the Hausdorff convergence of order three.
\end{proof}

\begin{remark}
\label{rem:hausdorff_midpoint}
If $x_Y$ is chosen as the midpoint of $Y$, then using \eqref{eq:rmrkptcgs}, \eqref{eq:mxboundhsdrf} and \eqref{eq:MxboundHsdrf}, we get
\[
q\left(\overline{f}(Y),H(Y)\right) \leq \left(\frac{L+\Theta}{48}\right)\omega(Y)^3.
\]
Thus, the midpoint choice improves the convergence constant $\tau$ from $\frac{L+\Theta}{6}$ to $\frac{L+\Theta}{48}$, while the order of Hausdorff convergence remains three.
\end{remark}

The preceding convergence analysis describes the estimation error in terms of the diameter of an arbitrary subbox. We next choose the midpoint of the initial box $P$ as the expansion point and derive an explicit bound on the maximum separation between the objective function and the proposed cubic underestimator over $P$. The resulting bound quantifies the tightness of the underestimator in terms of $L$ and the width of the box.

\begin{proposition}
\label{prop:max_separation_cubic}
Let $x^M$ denote the midpoint of $P$. The maximum separation between $f$ and the proposed cubic underestimator satisfies
\begin{equation*}\label{eq:max_separation_cubic}
d_P^{\mathrm{cub}}= \max_{y\in P} \{f(y)-m_{x^M}(y)\} \leq \frac{L+\Theta}{48} \left(\sum_{i=1}^n (x_i^U-x_i^L)^2
\right)^{3/2}.
\end{equation*}
\end{proposition}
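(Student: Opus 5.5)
The plan is to combine the pointwise error bound of Lemma \ref{thm:pointwise} with an explicit computation of the distance from the midpoint $x^M$ to an arbitrary point of $P$. Take $x=x^M$ in Lemma \ref{thm:pointwise}. Since $x^M\in P$, for every $y\in P$ we have
\[
0\le f(y)-m_{x^M}(y)\le \left(\frac{L+\Theta}{6}\right)\|y-x^M\|^3 .
\]
Because $f-m_{x^M}$ is continuous on the compact set $P$, the maximum defining $d_P^{\mathrm{cub}}$ is attained. It is therefore bounded by $\frac{L+\Theta}{6}\max_{y\in P}\|y-x^M\|^3$.

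The second step is to bound $\|y-x^M\|$ for $y\in P$. Componentwise, $x_i^L\le y_i\le x_i^U$ and $x_i^M=\tfrac12(x_i^L+x_i^U)$ give
\[
|y_i-x_i^M|\le \tfrac12 (x_i^U-x_i^L).
\]
Summing the squares yields
\[
\|y-x^M\|\le \tfrac12\Big(\sum_{i=1}^n (x_i^U-x_i^L)^2\Big)^{1/2},
\]
with equality at any vertex. Cubing this bound gives a factor $\tfrac18$, and $\tfrac{1}{6}\cdot\tfrac18=\tfrac{1}{48}$, which is exactly the claimed estimate.

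As a consistency check, note that the diameter of a box is attained between opposite vertices, so $\omega(P)=\big(\sum_i (x_i^U-x_i^L)^2\big)^{1/2}$. The proposition is thus just Remark \ref{rem:midpoint} applied with $Y=P$, and the result could alternatively be cited from there. I would nonetheless present the direct componentwise computation, so that the explicit dependence on the edge lengths is transparent.

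No step is a genuine obstacle. The only point needing care is to use the componentwise bound, not the cruder $\|y-x^M\|\le\omega(P)$, so that the factor $\tfrac18$ appears and the constant is $\tfrac1{48}$ rather than $\tfrac16$.
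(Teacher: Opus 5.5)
Your proposal is correct and follows essentially the same route as the paper: apply the bound of Lemma~\ref{thm:pointwise} with $x=x^M$, maximize over $P$, and use $\max_{y\in P}\|y-x^M\|=\tfrac12\bigl(\sum_{i=1}^n (x_i^U-x_i^L)^2\bigr)^{1/2}$, which the paper simply asserts and you verify componentwise. Your observation that the result is Remark~\ref{rem:midpoint} with $Y=P$ (since $\omega(P)$ is the length of the main diagonal of $P$) is also correct.
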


\begin{proof}
From \eqref{eq:mxMxbound}, we have
\begin{equation}\label{eq:propdist}
f(y)-m_{x^M}(y) \leq \left(\frac{L+\Theta}{6}\right)\|y-x^M\|^3.
\end{equation}
Taking the maximum over $y\in P$ in \eqref{eq:propdist} gives
\[
d_P^{\mathrm{cub}} \leq \left(\frac{L+\Theta}{6}\right) \max_{y\in P}\|y-x^M\|^3.
\]
which implies using 
$ \max_{y\in P}\|y-x^M\| = \frac{1}{2} \left(\sum_{i=1}^n (x_i^U-x_i^L)^2 \right)^{1/2}$, that
\[
d_P^{\mathrm{cub}}
\leq
\frac{L+\Theta}{48}
\left(
\sum_{i=1}^n (x_i^U-x_i^L)^2
\right)^{3/2}
\]
and the proof is complete.
\end{proof}

\begin{remark}
   Note that setting $\Theta=L$ in Theorems \ref{thmptcgs} and \ref{thm:hausdorff} still yields pointwise and Hausdorff convergence of order three, respectively. The convergence analysis of the proposed estimator scheme relies on the validity of the  cubic underestimator and overestimator over the box $P$. It does not require the cubic underestimator to satisfy the edge-concavity property. Edge-concavity is only used separately in the branch-and-bound framework to compute lower bounds by evaluating the underestimator at the vertices of each box.
\end{remark}

\section{A spatial branch-and-bound algorithm using CECU}\label{sec:bnb}
We incorporate CECU into a deterministic branch-and-bound framework for solving nonconvex global optimization problems over box-constrained domains. At each node of the branch-and-bound tree, CECU provides a valid lower bound over the corresponding box. The resulting lower bound can be used for node selection and pruning, while a local optimization procedure is used to obtain feasible points and update the global upper bound. The overall branch-and-bound procedure is summarized in Algorithm \ref{alg:bnb}. The computation of the lower bound for each subbox is described in Algorithm \ref{alg:lowerbound}.

\begin{algorithm}[H]
\caption{Branch-and-Bound Using CECU}
\label{alg:bnb}
\begin{algorithmic}[1]

\State Choose a tolerance $\varepsilon>0$, $P_0=[x^{L}, x^{U}]$ and a maximum number of
iterations $k_{\max}$.
\State Set the iteration counter $k=0$.
\State Initialize the active list $\mathcal{A}=\{P_0\},$

\State Apply a local optimization solver to $f$ over $P_0$ and obtain a feasible point $x_{\mathrm{inc}}$.
\State Set $ UB=f(x_{\mathrm{inc}}).$

\State Compute the lower bound $LB(P_0)$ using Algorithm \ref{alg:lowerbound}.
\State Set $LB=LB(P_0)$.

\While{$UB-LB>\varepsilon$
\textbf{and}
$\displaystyle
\frac{UB-LB}{\max\{1,LB\}}>\varepsilon$
\textbf{and}
$k\leq k_{\max}$}

    \State Select the active subbox $P\in\mathcal{A}$ having the smallest lower bound $LB(P)$.

    \State Bisect $P$ along its longest edge to obtain two child node $P_L$ and $P_R$.

    \State Remove $P$ from the active list.

    \For{$Q\in\{P_L,P_R\}$}

        \State Compute the lower bound $LB(Q)$ using
        Algorithm \ref{alg:lowerbound}.

        \State Apply a local optimization solver to $f$ over $Q$ and get a feasible point $x_Q$.

        \If{$f(x_Q)<UB$}
            \State $x_{\mathrm{inc}}\leftarrow x_Q$.
            \State $UB\leftarrow f(x_Q)$.
        \EndIf

        \If{$LB(Q)<UB$}
            \State Insert $Q$ into the active list.
        \EndIf

    \EndFor

    \If{$\mathcal{A}\neq\emptyset$}
        \State Update the global lower bound $LB=\min\limits_{Q\in\mathcal{A}}LB(Q)$.
    \Else
        \State \textbf{break}.
    \EndIf

    \State $k\leftarrow k+1$.

\EndWhile

\State \textbf{Return} $x_{\mathrm{inc}}$ and $UB$.

\end{algorithmic}
\end{algorithm}

For each active subbox, a valid lower bound is computed using the proposed CECU. The midpoint of the subbox is used as the expansion point, and  CECU is constructed using a Hessian Lipschitz constant and the parameter $\Theta$. Since CECU is edge-concave (see Remark 1), its minimum over the subbox is attained at a vertex. The complete procedure is given below in Algorithm \ref{alg:lowerbound}.

\begin{algorithm}[H]
\caption{Lower Bound Computation Using CECU}
\label{alg:lowerbound}
\begin{algorithmic}[1]

\Require Box set $P=[x^{L}, x^{U}]\subset\mathbb{R}^n$
\Ensure Lower bound $LB(P)$
\State Compute the midpoint $x^M=\frac{x^L+x^U}{2}$.

\State Evaluate
$f(x^M),  \nabla f(x^M), \text{ and } \nabla^2 f(x^M)$.

\State Compute a valid Hessian Lipschitz constant $L_P$ on $P$ (details in subsection \ref{subsec:lipschitz_estimation}).

\State Calculate
$
\Theta =
\max\left\{L_P,\frac{3}{5\omega(P)}\max_{1\leq i\leq n}\frac{\partial^2 f}{\partial x_i^2}(x^M)\right\}.
$

\State Construct CECU
\[
m_{x^M}(y)= f(x^M) +\nabla f(x^M)^T(y-x^M) +\frac12(y-x^M)^{\top} \nabla^2f(x^M)(y-x^M) -\frac{\Theta}{6}\|y-x^M\|^3.
\]

\For{each vertex $v\in V(P)$}
    \State Evaluate $m_{x^M}(v)$.
\EndFor

\State Set $LB(P)=\min\limits_{v\in V(P)}m_{x^M}(v)$.

\State \Return $LB(P)$.
\end{algorithmic}
\end{algorithm}

\subsection{Estimation of the Hessian Lipschitz Constant}
\label{subsec:lipschitz_estimation}
Note that the exact Hessian Lipschitz constant on $P$ is defined as
\[
L = \sup_{\substack{x,y\in P\\x\neq y}}\frac{\rVert\nabla^2f(x)-\nabla^2f(y)\rVert_2}{\lVert x-y\rVert}.
\]
In general, computing this exact constant requires solving a global optimization problem and can be computationally demanding. Therefore, in the numerical implementation, we use a computationally tractable upper bound $\hat L$ of the exact Lipschitz constant $L$ based on the Frobenius norm of the third-order partial derivatives of $f$. The Hessian Lipschitz constant $L_P$ used in
Algorithm~\ref{alg:lowerbound} is estimated using interval arithmetic.
Let $D^3f(z)$ denote the third derivative tensor of $f$ at
$z\in P$. For each $i,j,k=1,2,\ldots,n$, interval arithmetic is used to
obtain an upper bound $U_{ijk}$ satisfying
\[
U_{ijk}\geq
\sup_{z\in P}
\left|
\frac{\partial^3 f(z)}
{\partial x_i\partial x_j\partial x_k}
\right|.
\]
We then define
\begin{equation}
\label{eq:Lipschitz_upper_bound}
\hat{L}= \bigg( \sum_{i,j,k=1}^{n} U_{ijk}^2\bigg)^{1/2}.
\end{equation}
For any $x,y\in P$,
\[
\nabla^2 f(x)-\nabla^2 f(y) = \int_0^1 D^3f\bigl(y+t(x-y)\bigr)[x-y]\,dt,
\]
which implies that
\[
\begin{aligned}
\|\nabla^2 f(x)-\nabla^2 f(y)\|_2
&\leq \sup_{z\in P}\|D^3f(z)\|_{F}\|x-y\|\leq \hat{L}\|x-y\|.
\end{aligned}
\]
Thus, $\hat{L}$ obtained from \eqref{eq:Lipschitz_upper_bound} is a computationally tractable upper bound for the Hessian Lipschitz constant $L$ on $P$. 
We consider an illustrative example to show the methodology to construct the underestimator.

\begin{example}
\label{ex:cubic_underestimator}
Consider the nonconvex function $f:\mathbb{R}^{2}\rightarrow\mathbb{R}$ on a box set $P=[-1,1]\times [-1,1]$ in $\mathbb{R}^{n}$ such that  $f(x_1,x_2)=x_1^2x_2.$
Let us calculate the proposed cubic underestimator for the given function $f$ on $P$ by taking the midpoint $x^{M}=(0,0)$ of $P$ as an expansion point. For this, we need $\nabla f(x^M),\nabla^2f(x^M)$, and Lipschitz Hessian constant $L$ of $f$ on $P$. The gradient and Hessian of $f$ are given by
\[
\nabla f(x_1,x_2)= \begin{pmatrix} 2x_1x_2\\ x_1^2 \end{pmatrix} \text{  and }\nabla^2 f(x_1,x_2)
=\begin{pmatrix}
2x_2 & 2x_1\\
2x_1 & 0
\end{pmatrix},
\]
respectively. Thus, we have $ \nabla f(x^M)=\begin{pmatrix} 0\\ 0 \end{pmatrix}$ and
$\nabla^2f(x^M)=\begin{pmatrix} 0 & 0\\ 0 & 0 \end{pmatrix}$. Computing the exact Hessian Lipschitz constant $L$ on $P$ can be computationally expensive. Therefore, instead of calculating the exact Hessian Lipschitz constant, we use an upper bound $\widehat{L}$ satisfying $L\leq \widehat{L}$. This upper bound is obtained using the Frobenius norm of the third derivative tensor of $f$. For the considered function, we have
\[
\begin{aligned}
\|\nabla^3f(z)\|_F &= \left( \sum_{i,j,k=1}^{2} \left\vert
\frac{\partial^3f(z)} {\partial x_i\partial x_j\partial x_k} \right\rvert^2 \right)^{1/2}\\
&= \sqrt{2^2+2^2+2^2}= 2\sqrt{3}.
\end{aligned}
\]
Since the third order derivatives are constant on $P$, we have
\[
\sup_{z\in P}\|\nabla^3f(z)\|_F=2\sqrt{3}.
\]
Thus, we get
\begin{align*}
\|\nabla^2f(x)-\nabla^2f(y)\|_2
&\leq \int_0^1 \left\| \nabla^3f\bigl(y+t(x-y)\bigr) \right\|_F \|x-y\|\,dt= 2\sqrt{3}\,\|x-y\|.
\end{align*}
Therefore, we have  $L\leq 2\sqrt{3}=\hat{L}$. Hence, using $f(x^{M})=0, \nabla f(x^M)=\begin{pmatrix} 0\\ 0 \end{pmatrix}$,
$\nabla^2f(x^M)=\begin{pmatrix} 0 & 0\\ 0 & 0 \end{pmatrix}$, and $\Theta=2\sqrt{3}$ in Definition \ref{def:undover}, the cubic underestimator $m_{x^{M}}$ for $f$ on $P$ is given by
\[
m_{x^M}(y_1,y_2)= -\tfrac{\sqrt{3}}{3} \left(y_1^2+y_2^2\right)^{3/2} \text{ where } (y_{1}, y_{2})\in P.
\]
We plot the graphs of $f$ and its underestimator $m_{x^M}$ on $P$ considered in Example \ref{ex:cubic_underestimator} in Figure \ref{fig:objective_cecu}. The cyan color plot represents the graph of the function $f$, while the pink color plot represents the graph of the underestimator $m_{x^M}$ constructed at the midpoint $x^M=(0,0)$ of $P$. As we can see from the figure, the plot corresponding to $m_{x^M}$ lies below the graph of $f$ on $P$, which implies that $m_{x^M}(x_1,x_2)\leq f(x_1,x_2)$ for all $(x_1,x_2)\in P$. 
\end{example}
\begin{figure}[H]
\centering
\includegraphics[width=0.48\textwidth]{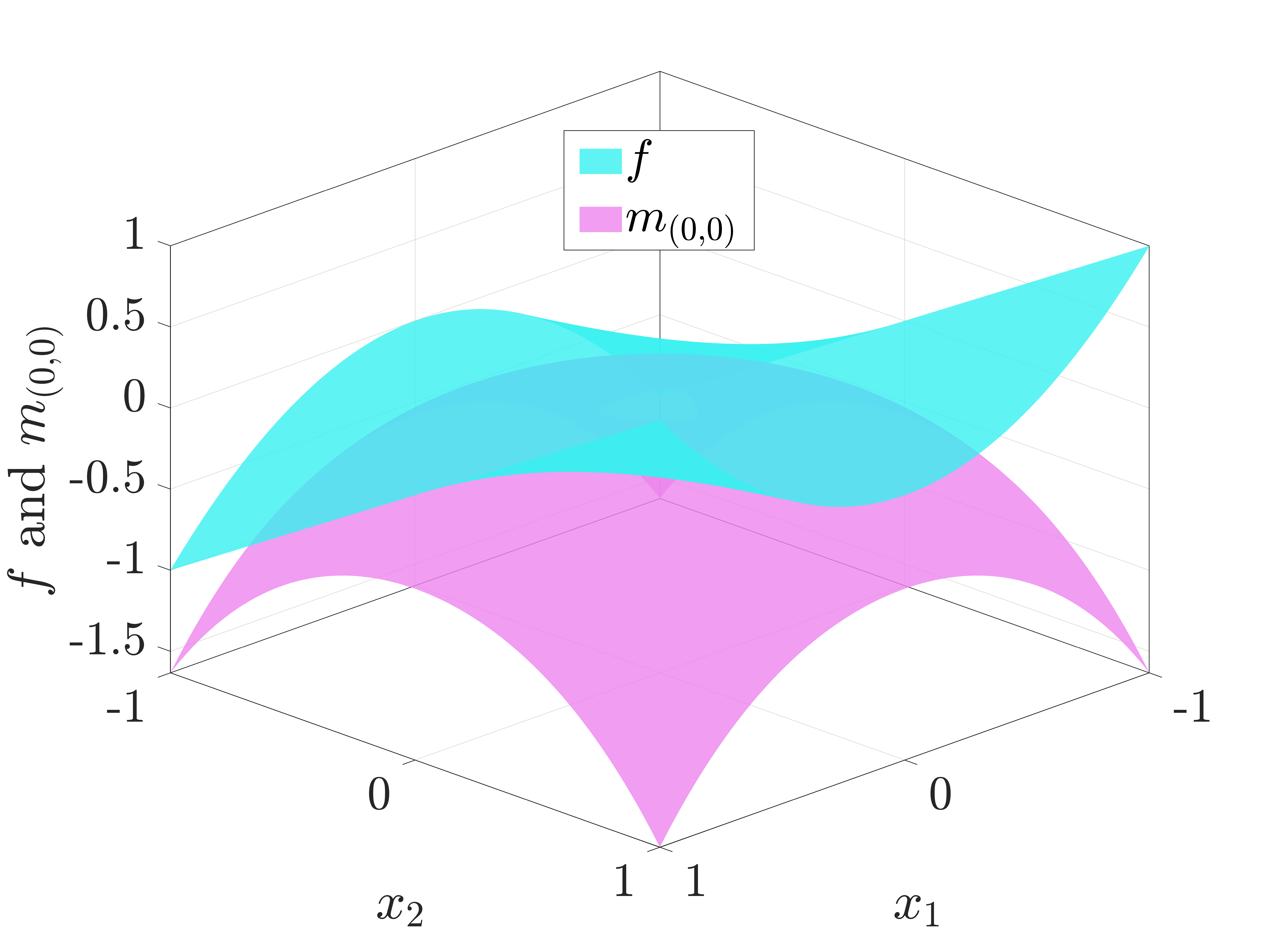}
\caption{Plot for function $f$ and its underestimator $m_{(0,0)}$ together on $P$ considered in Example \ref{ex:cubic_underestimator}}
\label{fig:objective_cecu}
\end{figure}

\section{Numerical experiments}\label{sec:numerical}
In this section, we investigate the numerical performance of the proposed CECU when used with the branch-and-bound framework described in Section \ref{sec:bnb}. The numerical experiments are performed on a collection of
130 box-constrained optimization problems from \cite{puranik2017bounds} up to 20 dimensions, and the list of problems is provided in the electronic supplementary material. The proposed method is compared with two existing relaxation approaches: the $\alpha$BB method \cite{adjiman1998global} and SOEC underestimator \cite{hasan2018edge}. For each method, we record the number of iterations, the number of explored nodes, the CPU time, and the optimality gap. A problem is considered solved when the prescribed termination criterion given in Algorithm \ref{alg:bnb} is satisfied with $\epsilon=10^{-4}$. The comparison is performed in terms of the number of branch-and-bound nodes explored and the total CPU time taken by the algorithm to satisfy the stopping condition. In addition, we examine the average CPU time required to process an explored branch-and-bound node.

\subsection{Implementation and computational environment}
\label{subsec:implementation}

All numerical experiments were conducted on a computer equipped with a 12th Gen Intel(R) Core(TM) i7-12700 2.10 GHz processor and 32 GB of RAM. The branch-and-bound framework was implemented in MATLAB R2024a.

The computational implementation consists of several components. The parameters such as $\alpha$ for $\alpha$BB \cite{adjiman1998global}, $\theta$ for SOEC \cite{hasan2018edge}, and Lipschitz constant $L$ for the proposed cubic edge-concave underestimator were computed in C++  with the help of symbolic function representation and Hessian using \texttt{GiNaC} library \cite{bauer2002introduction}. Since determining the exact Lipschitz constant of the Hessian at each branch-and-bound node can be computationally demanding, we instead employ a tractable and valid upper bound based on the Frobenius norm of the third-order derivative tensor using interval arithmetic. The resulting bound is used in the construction of the cubic underestimator. The details are provided in Section \ref{subsec:lipschitz_estimation}. Upper bounds were obtained by solving the model to local optimality using the CONOPT solver embedded within GAMS, and the corresponding incumbent solutions were passed to the branch-and-bound procedure.

The algorithm terminates when either the absolute optimality gap criterion or the relative optimality gap criterion given in Algorithm \ref{alg:bnb} is satisfied, or when the maximum number of iterations, equal to $1000$ (which corresponds to a maximum of 1999 explored nodes), is reached.  The same branch-and-bound framework is used for all three methods. In particular, the branching strategy, node selection procedure, upper-bound computation, stopping criteria, and maximum number of iterations are kept identical. The only difference between the three approaches is the construction and minimization of the underestimator used to compute the lower bound of each branch-and-bound node. Since CECU and SOEC are edge-concave, their minima are attained at the vertices of the box. Hence, the lower bound is computed by evaluating the corresponding underestimator at the vertices of the box and taking the minimum value. For $\alpha$BB method, the corresponding lower bound subproblem over a box set is solved using the MATLAB inbuilt function \texttt{fmincon}.




\subsection{Data profiles and performance measures}
We considered a common set of problems for all three methods.
To check the performance of all three methods over the complete set of considered problems, we use data profiles \cite{more2009benchmarking}.
These profiles facilitate a structured comparison of optimization algorithms by considering a set of benchmark problems $\mathcal{P}$ and a collection of algorithms $\mathcal{A}$. 
A data profile measures the absolute performance of an optimization algorithm. For each problem $p\in\mathcal{P}$ and algorithm $a\in\mathcal{A}$, let $m_{p,a}$ denote the computational effort required by algorithm $a$ to satisfy the stopping condition of branch-and-bound algorithm.
Depending on the performance measure considered, $m_{p,a}$ may represent the number of branch-and-bound nodes explored or the CPU time required to solve problem $p$. The data profile of algorithm $a$ is defined by
\begin{equation}
    d_a(\zeta) = \frac{1}{|\mathcal{P}|} \left| \left\{p\in\mathcal{P}:m_{p,a}\leq\zeta\right\} \right|,
    \label{eq:dataprofile}
\end{equation}
where $|\mathcal{P}|$ denotes the total number of benchmark problems and $\zeta$ represents a prescribed limit. Thus, $d_a(\zeta)$ gives the fraction of benchmark problems that algorithm $a$ solves within a limit of $\zeta$. If algorithm $a$ fails to satisfy the convergence criterion for problem $p$, we set
$m_{p,a}=\infty$. This means that this problem is counted as unsolved. 

We first consider a node based data profile (Figure \ref{fig_profile_nodes} (A)) by taking
$m_{p,a}=N_{p,a}$, where $N_{p,a}$ denotes the number of branch-and-bound nodes explored by algorithm $a$ for problem $p$. We also consider a CPU time based data profile (Figure \ref{fig_profile_nodes} (B)) by taking $m_{p,a}=T_{p,a}$, where $T_{p,a}$ denotes the total CPU time required by algorithm $a$ to solve problem $p$. The reported CPU time corresponds to the total wall-clock time of the branch-and-bound procedure, including the time required for the parameter computations ($\alpha,\theta$, and $\Theta$), the MATLAB implementation of the branch-and-bound algorithm, and the external GAMS calls used to obtain upper bounds. This measure reflects the overall computational effort of the branch-and-bound procedure.

In addition to the node based and CPU time data profiles, we consider the average CPU time per explored node (Figure \ref{fig_profile_nodes} (C)), defined by $T_{\mathrm{node}} =\frac{T_{\mathrm{CPU}}}{N_{\mathrm{nodes}}},$
where $N_{\mathrm{nodes}}$ denotes the number of explored branch-and-bound nodes by the algorithm. This quantity provides additional information about the computational cost of evaluating nodes, and therefore, helps distinguish between the efficiency of the underestimators and the computational cost required to construct and evaluate them.





\begin{figure}[H]
    \centering
    \includegraphics[width=\linewidth]{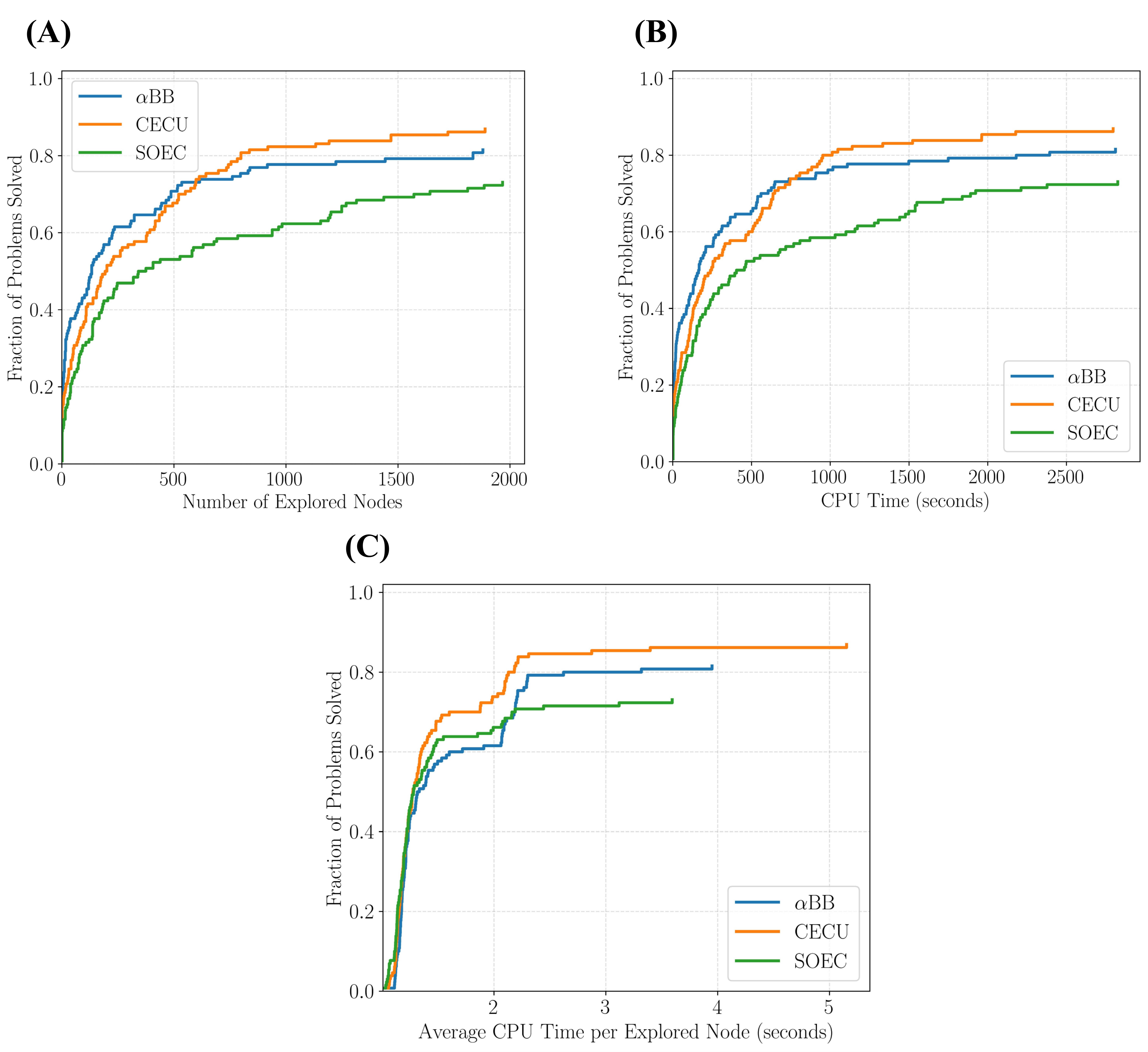}
    \caption{Data profiles for $\alpha$BB, CECU, and SOEC based on (A) explored nodes, (B)  CPU time, and (C) average CPU time per node}
    \label{fig_profile_nodes}
\end{figure}

\subsection{Discussion of numerical results}
\label{subsec:numerical_discussion}

The data profiles in Figure \ref{fig_profile_nodes} (A), (B), and (C) show the fraction of benchmark problems solved as a function of the
corresponding computational measure. In data profiles, a curve that lies higher and reaches a given fraction of solved problems at a smaller computational limit indicates better computational performance.

Figure \ref{fig_profile_nodes} (A) presents the node based data profiles. For a small node budget, $\alpha$BB initially solves a larger fraction of the benchmark problems than CECU. However, CECU outperforms $\alpha$BB as the number of nodes approaches the node limit, which indicates that CECU solves a larger number of problems within node limits than the other two methods. In particular, CECU solves
approximately $82\%$ of the considered problems within $1000$ explored nodes and approximately $85\%$ within $1500$ explored nodes. The profile reaches a final value of approximately $87\%$. In comparison, $\alpha$BB solves approximately $81\%$, while SOEC solves approximately $73\%$ of the total considered problems.

A similar behaviour is observed for the CPU time data profiles shown in Figure \ref{fig_profile_nodes} (B).  CECU solves approximately $81\%$ of the considered problems within $1000$ seconds and this percentage reaches $83\%$ within $1500$ seconds.  Only a few computationally demanding instances require substantially larger CPU time, giving rise to the tail of the CECU profile. However, $\alpha$BB
solves approximately $76\%$ and $78\%$ of the considered problems within $1000$ and $1500$ seconds, respectively, while SOEC solves approximately $59\%$ within 1000 seconds and $64\%$ within 1500 seconds.  

Figure \ref{fig_profile_nodes} (C) further examines the computational cost of all three methods in terms of the average CPU time required per explored branch-and-bound node. The CECU curve is generally shifted to the left of the $\alpha$BB and SOEC curves over the relevant range, and it ultimately attains the largest fraction of solved problems. The CECU profile reaches approximately $70\%$ of the considered problems at an average CPU time of about $1.5$ seconds per explored node and approximately $84\%$ at around $2.3$ seconds. On the other hand, $\alpha$BB reaches approximately $80\%$ at about $3.0$ seconds and approximately $81\%$ at about $3.9$ seconds, while SOEC reaches approximately $72\%$ at about $3.5$ seconds and a final value of approximately $73\%$. These results suggest that the proposed method not only benefits from a reduction in the number of explored nodes but also provides a competitive computational cost required at each node. The difference in computational time mainly comes from how the lower bounds are computed. In CECU and SOEC, the minimum over a subbox can be found by evaluating the underestimator at the vertices of the subbox, where as $\alpha$BB requires an additional optimization step to find the minimum of its convex underestimator over the subbox.

Taken together, the three data profiles demonstrate a clear computational advantage of the proposed CECU over the larger computational budgets considered. These results provide strong numerical evidence that the proposed CECU is not only effective in reducing the amount of branch-and-bound exploration but also maintains a competitive computational cost. 

\section{Conclusion}
\label{sec:conclusion}
In this paper, we proposed a new cubic edge-concave underestimator for deterministic global optimization of twice continuously differentiable functions with Lipschitz continuous Hessians. The proposed underestimator is constructed by augmenting the second-order Taylor expansion with a cubic remainder term derived from the Lipschitz bound on the Hessian. This construction provides a valid underestimator that is suitable for deterministic branch-and-bound algorithms. We derived a sufficient condition for edge-concavity on a given box, enabling lower bound computation via vertex evaluation within the branch-and-bound framework. We also analyzed the convergence properties of the proposed estimator scheme and established both pointwise and Hausdorff convergence of order three with respect to the width of the box.  

 The proposed CECU was incorporated into a deterministic branch-and-bound framework and compared numerically with the $\alpha$BB and SOEC underestimators on a collection of benchmark problems. The numerical experiments indicate that CECU can substantially reduce the number of branch-and-bound nodes required to solve the problems. CECU ultimately solves 87\% of the 130 benchmark problems within the prescribed node limit, outperforming $\alpha$BB and SOEC. Moreover, its average computational cost per explored node remains competitive with the two compared approaches.

 Future work will focus on improving the estimation of the Hessian Lipschitz constant. The proposed approach can also be extended to constrained global optimization problems. Furthermore, instead of relying exclusively on the vertex property of edge-concave functions to compute lower bounds, alternative strategies, such as constructing the polyhedral convex envelope of CECU using vertex evaluations and facet-defining hyperplanes, will be investigated  \cite{meyer2005convex}.


  \section*{Acknowledgements}
This research received support through Schmidt Sciences, LLC.
   \subsection*{Supplementary information}

Electronic Supplementary Information (ESI) is available.

\section*{Conflict of interest}

 The authors declare that they have no known competing financial interests or personal relationships
that could have appeared to influence the work reported in this paper.

\addcontentsline{toc}{section}{References}
\bibliographystyle{apalike}
\bibliography{ref}

\end{document}